\def\beq{\begin{equation}}  \def\eeq{\end{equation}}
\def\beqn{\begin{eqnarray}} \def\eeqn{\end{eqnarray}}
\def\beqnn{\begin{eqnarray*}}   \def\eeqnn{\end{eqnarray*}}
\def\barr{\begin{array}}    \def\earr{\end{array}}
\def\bit{\begin{itemize}}   \def\eit{\end{itemize}}
\def\ben{\begin{enumerate}} \def\een{\end{enumerate}}
\def\bc{\begin{center}}     \def\ec{\end{center}}
\def\eps{\varepsilon}
\def\disp#1{{\displaystyle #1}}
\def\J{{\mathcal J}}
\def\P{{\mathcal P}}
\def\C{{\mathcal C}}
\def\E{{\mathcal E}}
\def\H{{\mathcal H}}
\def\odelta{{\overline{\Delta}}}
\def\Int{{\rm Int}}
\def\ph{\varphi}
\newcommand {\RR} {\mathbb R}
\newcommand {\defeq}    {\stackrel{\rm def}{=}}
\newcommand {\PROBA}[1] {\mathbb P\left(#1\right)}
\newcommand {\EXPECT}[1] {\mathbb E\left( #1 \right)}
\newcommand {\LEBEG}[2] {{\mathbb L}^{^{_{\!\!#2}}}\big(#1\big)}
\newcommand {\COMB}[2] {\frac{#2 !}{#1 ! (#2 - #1)!}}
\newcommand {\ASTK}[2] {{ {\mathcal I}^{#2}\left( #1 \right) }}
\newcommand {\ASTKE}[2] {{ {\mathcal D}^{#2}\left( #1 \right) }}
\newcommand {\TTJ}[1] {{ {\mathcal T}_{ #1 }}}
\newcommand {\TOJ}[1] {{ {\mathcal T}_{ #1}^\bot }}
\newcommand {\TTT}[2] { {\mathcal T}^{#2}_{ #1} }
\newcommand {\CK}[1]  {\mbox{\boldmath$\overline{C}$\unboldmath$_{#1}$}}
\newcommand {\QK}[1]  {\mbox{\boldmath$\overline{Q}$\unboldmath$_{K,#1}$}}
\newcommand {\PS}[2] {\langle #1 , #2 \rangle}
\newcommand {\LS}[2] {{\mathcal L}_{#1}(#2)}
\newcommand {\JACOB}[1] {{ [ \! [   #1  ] \! ] }}
\DeclareMathOperator{\val}{val}
\DeclareMathOperator{\Span}{span}
\newtheorem{theorem}{Theorem}
\newtheorem{proposition}{Proposition}
\newtheorem{lemma}{Lemma}
\newtheorem{corollary}{Corollary}
\newtheorem{remark}{Remark}
\def\DESSINUN{
\psfrag{tt1}{$\partial\TTT{\{1\}}{\tau}=\partial\TTT{\{4\}}{\tau}$}
\psfrag{tt2}{$\partial\TTT{\{2\}}{\tau}$}
\psfrag{tt3}{$\partial\TTT{\{3\}}{\tau}$}
\psfrag{p1}{$\psi_1$}
\psfrag{p2}{$\psi_2$}
\psfrag{p3}{$\psi_3$}
\psfrag{p4}{$\psi_4$}
\psfrag{po2}{$P_{\TOJ{\{2\}}}(\LS{\|.\|}{\tau})$}
\psfrag{po4}{$P_{\TOJ{\{4\}}}(\LS{\|.\|}{\tau})$}
\psfrag{po3}{$P_{\TOJ{\{3\}}}(\LS{\|.\|}{\tau})$}
\psfrag{ls}{$\LS{\|.\|}{\tau}=\TTT{\emptyset}{\tau}
  =\ASTK{0}{\tau}$}
\psfrag{lsd}{$\partial \LS{f_d}{\theta}$}
\begin{figure}
\centerline{\hbox{\includegraphics[width=14.5cm]{./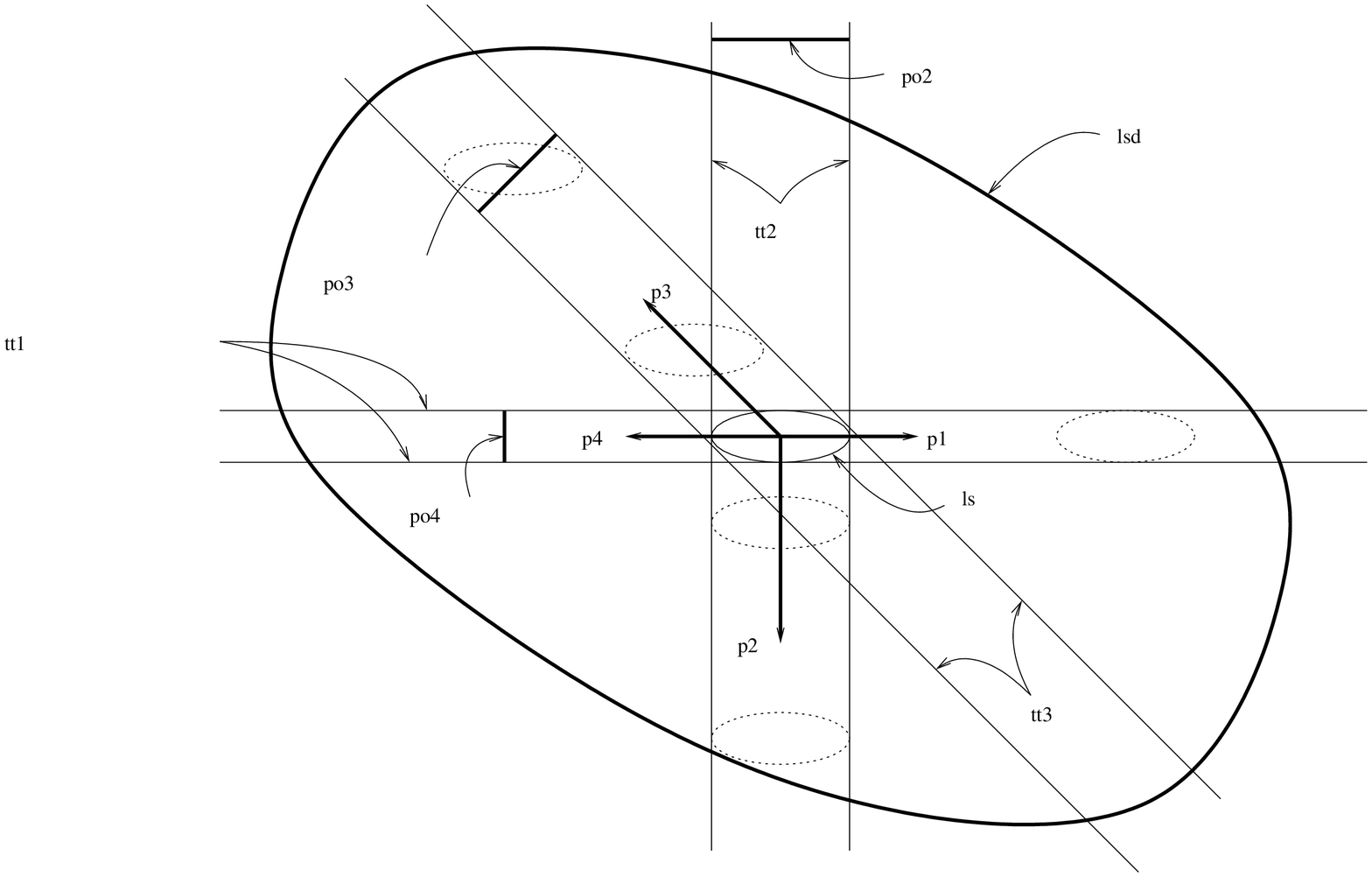}}}
\caption{\label{dessin1}
Example in dimension $2$. Let the dictionary read~
$\{\psi_1,\psi_2,\psi_3,\psi_4\}$. On the drawing, the sets
$P_{\TOJ{\{i\}}}(\LS{\|.\|}{\tau})$, for $i=2, 3, 4$, are shifted by an
element of $\TTJ{\{i\}}$. The dotted sets represent
translations of $\LS{\|.\|}{\tau}$. The set-valued function $\ASTK{}{\tau}$,
as presented in (\ref{ASTK}) and Proposition~\ref{astk=sstk}, gives rise to the following situations:
$\ASTK{0}{\tau}=\LS{\|.\|}{\tau}=\TTT{\emptyset}{\tau}$,
$\ASTK{1}{\tau} = \TTT{\{1\}}{\tau}\cup \TTT{\{2\}}{\tau}\cup
\TTT{\{3\}}{\tau}$ and $\ASTK{2}{\tau} =\RR^2=\TTT{\{1,2\}}{\tau}=\TTT{\{2,3\}}{\tau}=\ldots$
The symbol  $\partial$ is used to  denote the boundaries of the sets.
}
\end{figure}
}
\def\INTERSECTION{
\psfrag{V12}{$\TTJ{\{1,2\}}$}
\psfrag{V34}{$\TTJ{\{3,4\}}$}
\psfrag{V1234T}{$\TTT{\{1,2\}}{\tau} \cap \TTT{\{3,4\}}{\tau}$}
\psfrag{V1234}{$W = \TTJ{\{1,2\}} \cap \TTJ{\{3,4\}}$}
\psfrag{p1}{$\psi_1$}
\psfrag{p2}{$\psi_2$}
\psfrag{p3}{$\psi_3$}
\psfrag{p4}{$\psi_4$}
\begin{figure}
\centerline{\hbox{\includegraphics[width=17cm]{./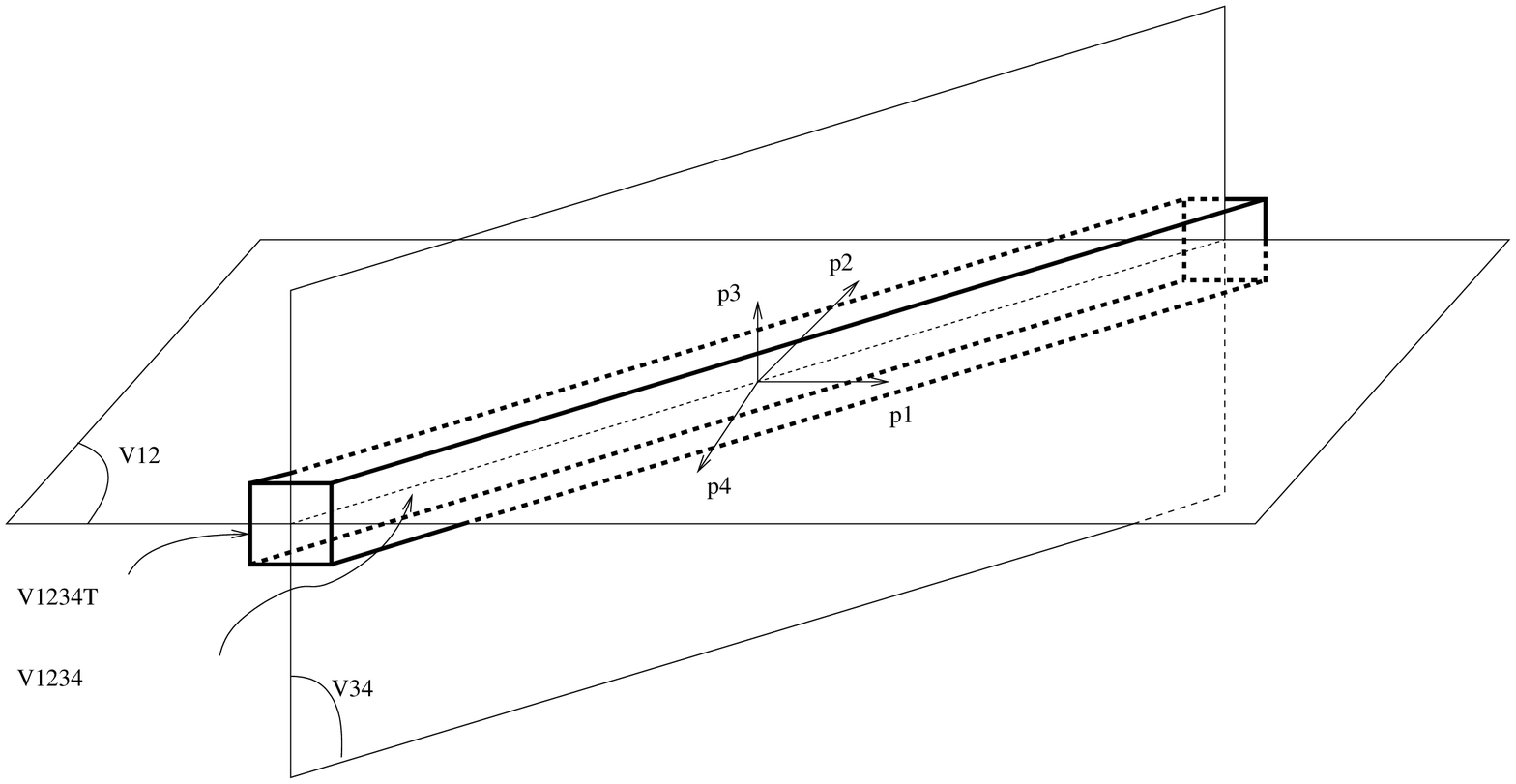}}}
\caption{\label{intersection}
Example of an intersection in dimension $3$. $\TTT{\{1,2\}}{\tau}$ is in between to planes, parallel to
$\TTJ{\{1,2\}}$. Same remark for $\TTT{\{3,4\}}{\tau} $. The set
$\TTT{\{1,2\}}{\tau} \cap \TTT{\{3,4\}}{\tau}$ is of
the form $ W + P_{W^\bot}\LS{\tilde g}{\tau}$, where $\tilde g$ is a
norm and for $W= \TTJ{\{1,2\}} \cap \TTJ{\{3,4\}}$. We also have
$\dim(\TTJ{\{1,2\}} \cap \TTJ{\{3,4\}}) < dim(\TTJ{\{1,2\}})=dim(\TTJ{\{3,4\}})$.
}
\end{figure}
}
\title{Average performance of the sparsest approximation using a general dictionary}
\author{Fran\c{c}ois Malgouyres$^\star$ ~and~ Mila Nikolova$^\diamond$}
\date{$^\star$  LAGA/L2TI, Universit\'e Paris 13, CNRS, 99 avenue
  J.B. Cl\'ement, 93430 Villetaneuse, France;\\
 (33/0) 1-49-40-35-83, malgouy@math.univ-paris13.fr\\
$^\diamond$ CMLA, ENS Cachan, CNRS, PRES UniverSud, 61 Av. President Wilson,
  F-94230 Cachan, France\\
(33/0) 1 47 50 59 08
nikolova@cmla.ens-cachan.fr\\ ~ \\ \today }
\begin{document}
\maketitle

\addtolength{\baselineskip}{2mm}

{\bf Key words:} compression; approximation ; best K-term approximation; constrained minimization; dictionary; $\ell_0$ norm; estimation; frames; measure theory;
nonconvex functions; sparse representations.

{\bf AMS class:} 41A25, 41A29, 41A45, 41A50, 41A63.

\begin{abstract}
We consider the minimization of the number of non-zero coefficients
(the $\ell_0$ ``norm'') of the representation of a data set in terms
of a dictionary under a fidelity constraint. (Both the dictionary and
the norm defining the constraint are arbitrary.) This (nonconvex)
optimization problem naturally leads to the sparsest representations,
compared with other functionals instead of the  $\ell_0$ ``norm''.

Our goal is to measure the sets  of data yielding a $K$-sparse solution---i.e.
involving $K$ non-zero components. Data are assumed uniformly
distributed on a  domain defined by any norm---to be chosen by the
user. A precise description of these sets of data is given and
relevant bounds on the Lebesgue measure of these sets are derived.
They naturally lead to bound the probability of getting a $K$-sparse
solution. We also express the expectation of the number of non-zero components.
We further specify these results in the case of the Euclidean norm,
the dictionary being arbitrary.

\end{abstract}

\section{Introduction}
\subsection{The problem under consideration}

Our goal is to represent observed data $d\in\RR^N$ in a economical way using a dictionary $(\psi_i)_{i\in I}$ on $\RR^N$,
where $I$ is a finite set of indexes and
\beq \Span\big\{ \psi_i:i\in I \big\}=\RR^N  .\label{spanRN}\eeq
We study the sparsest representation where the
(unknown) coefficients  $(\lambda_i)_{i\in I}$ are estimated
 by solving the constraint optimization problem
$(\P_d)$  given below:
\beq(\P_d ): ~~~~~~~~~~~~~~~~~~~~\left\{\begin{array}{l}
\disp{\mbox{\rm minimize}_{(\lambda_i)_{i\in I}}} \ell_0\big((\lambda_i)_{i\in I}\big), \\
\mbox{under the constraint~: ~~} \left\|\disp{\sum_{i\in I}} \lambda_i \psi_i -  d\right\|\leq
\tau,
\end{array}\right.~~~~~~~~~~~~~~~~~~~~~~~~~~~~~~~~
\label{Pd}\eeq
with
\[\ell_0((\lambda_i)_{i\in I}) \defeq \#\big\{i\in I: \lambda_i\neq 0\big\},\]
where $\#$ stands for cardinality, $\|.\|$ is an arbitrary norm and  $\tau>0$ is a fixed parameter.
Let us emphasize  that for any $d\in\RR^N$, the constraint in $(\P_d)$ is nonempty thanks to \eqref{spanRN}
and that the minimum is reached since $\ell_0$ takes its values in the finite set $\{0,1,\ldots,\#I\}$.

Given the data $d$, the norm $\|.\|$, the parameter $\tau$ and the
dictionary, the solution of $(\P_d)$ is the sparsest possible, since the objective function $\ell_0$ in (\ref{Pd}) minimizes the number of all
non-zero coefficients in the set  $(\lambda_i)_{i\in I}$ without penalizing them.

The function $\ell_0$ is sometimes abusively called the $\ell_0$-norm.
It can equivalently be written as
\beq \sum_{i\in I}\ph(\lambda_i)~~~\mbox{where}~~~
\ph(t)=\left\{\barr{ccc} 0 &\mbox{if}& t=0\\
1&\mbox{if}& t\neq 0
\earr\right.\label{ph}\eeq
The function $\ph$ is  discontinuous  at zero and $\C^\infty$ beyond the origin, and has a long history.
It was used in the context of Markov random fields by Geman and Geman 1984, cf. \cite{Geman84} and Besag 1986 \cite{Besag86}
as a prior in MAP energies to restore labeled images (i.e. each $\lambda_i$ belonging to a finite set of values):
\beq\E(\lambda)=\big\|\sum_{i\in I}\lambda_i \psi_i -  d\big\|_2^2+\beta\sum_{i\sim j}\ph(\lambda_i-\lambda_j),\label{MAP}\eeq
where the last term in (\ref{MAP})
counts the number of all pairs of dissimilar neighbors $i$ and $j$, and $\beta>0$ is a parameter.
This label-designed form is known as the Potts prior model, or as the multi-level logistic model \cite{Besag89,Li95}.
Guided by the {\it Minimum description length} principle of Rissanen, Y. Leclerc proposed in 1989 in \cite{Leclerc89}
the same prior to restore piecewise constant, real-valued images.
The hard-thresholding method to restore noisy wavelet coefficients, proposed by Donoho and Johnstone in 1992, see \cite{Donoho92},
amounts to minimize for each coefficient $\lambda_i$ a function of the form $\|\lambda_i-g_i\|_2^2+\beta\ph(\lambda_i)$ where
the noisy coefficients read $g_i=\langle\psi_i^*,d\rangle$, $\forall i\in I$ where $(\psi_i)_{i\in I}$ is a wavelet basis.
Very recently, the energy \eqref{MAP} was successfully used to reconstruct 3D tomographic images by using stochastic continuation by
Robini and Magnin \cite{Robini07}.
Let us notice that even though the problem $(\P_d)$ in (\ref{Pd}) and the minimization of $\E$ in (\ref{MAP}) are closely related,
there is no rigorous equivalence in general.

The context of digital image compression is of a particular interest, since it
is typically the problem we are modeling in the paper. In compression,
one considers different classes of images. Those digital images  live in $\RR^N$ and are
obtained by sampling an analogue image. Their distribution
in $\RR^N$ is one of the main unknown in image processing and, in
practice, we only know some realizations of this distribution
(i.e. some images). Given this (unknown) distribution, the goal of
image compression is to build a coder (that encodes elements of
$\RR^N$) which assigns a small code to images. Typically, we want for
every image $d\in\RR^N$
\[\PROBA{length(code(d)) = K}
\]
to be as large as possible for $K$ small, and small for $K$ large. We
also want the decoder to satisfy $decode(code(d)) \sim d$.

The link with the problem $(\P_d)$, in \eqref{Pd}, is that the
current image compression standards (JPEG, JPEG2000) encode quantized versions
of the coordinates of the image in a given basis. Moreover, most of
the gain is made by choosing a basis such that the number of non-zero
coordinates (after the quantization process) is small (\cite{GormishLeeMarcellinJPEG2000,WallaceJPEG}). That is, we
want to solve $(\P_d)$ for each $\lambda_i$ belonging to a finite set of
values and for a basis $(\psi_i)_{i\in I}$. This link between image
compression and $(\P_d)$ might seem restrictive when we only consider a
basis. It makes much more sense when we consider a redundant system of
vectors $(\psi_i)_{i\in I}$. The use of redundant dictionaries has
known a strong development in the past years, see \cite{CoifmanWickerhauser,MallatZhang,pati93OMP,ChenDonoho} for the
most  famous examples. In the context of dictionaries, we know that the length of
the code for encoding $(\lambda_i)_{i\in I}$ is in general
proportional to $\ell_0((\lambda_i)_{i\in I})$. The problem $(\P_d)$
therefore reads : minimize the codelength of the image while
constraining a given level of accuracy of the coder. This is exactly
the goal in image compression.

Finding an exact solution to $(\P_d)$ in large dimension
(which is necessary in order to apply $(\P_d)$ to image compression)
still remains a challenge.
In fact, the methods described in
\cite{CoifmanWickerhauser,pati93OMP,ChenDonoho} can be
seen as heuristics approximating $(\P_d)$. The links between the
performances of those heuristics and the performances of $(\P_d)$ is
not completely clear. It is also a goal of the paper to provide a mean
for comparing those algorithms.


\subsection{Our contribution}

In this paper, we estimate the ability of the model $(\P_d)$ to
provide a sparse representation of data which follows a given
distribution law. The distribution law is uniform in the
$\theta$-level set of a norm $f_d$ :
\[\LS{f_d}{\theta}=\{w\in\RR^N, f_d(w)\leq \theta\}.\]

In order to do this we
\begin{itemize}
\item Give a precise (and non redundant) geometrical description of the sets
\[\ASTK{K}{\tau} =\left\{d\in\RR^N, \val(\P_d)\leq K\right\},
\]
and
\begin{equation}\label{poatnb}
\ASTKE{K}{\tau} = \left\{d\in\RR^N, \val(\P_d) = K \right\}
\end{equation}
where $\val(\P_d)$ denotes
$\ell_0((\lambda_i)_{i\in I})$ for a solution $(\lambda_i)_{i\in I}$ of
$(\P_d)$ and for $K=0,\ldots,N$, $\tau>0$. This is done in Theorem \ref{proj=all} and equation \eqref{diff1}.

\begin{remark} It is easy to see that
$\Big\{\psi_i: \lambda_i\neq 0~\mbox{for}~( \lambda_i)_{i\in I}~\mbox{\rm solving}~(\P_d)\Big\}$
forms a set of linearly independent vectors. Therefore for all $d\in\RR$ we will find a solution
with at most $N$ nonzero coefficients, even if the size of the dictionary is huge, $\#I \gg  N$.
So in this work we consider solutions with sparsity $K\leq N$.
\label{REM1}\end{remark}
\item Once these sets are precisely described, we are able to
  bound (both from above and from below), their measure (more precisely the
  measure of their intersection with $\LS{f_d}{\theta}$). The
  difference between the upper and the lower bound is negligible when
  compared to $\left(\frac{\tau}{\theta}\right)^{N-K}$, when
  $\frac{\tau}{\theta}$ is mall enough. Moreover,
  these bounds show that the measures of
  $\ASTK{K}{\tau}\cap\LS{f_d}{\theta}$ and
  $\ASTKE{K}{\tau}\cap\LS{f_d}{\theta}$ asymptotically behave like
  \[\CK{K} \theta^N\left(\frac{\tau}{\theta}\right)^{N-K},\]
  as $\frac{\tau}{\theta}$ goes to $0$.

  The constants $\CK{K}$ are defined in \eqref{CK}. They are made of
  the sum of constants $C_V$ over all  possible vector
  subspaces $V$ of dimension $K$, spanned by elements of the dictionary
  $(\psi_i)_{i\in I}$. The constants $C_V$ are built in Proposition
  \ref{propV} and Corollary \ref{rmkTJ}. They have the form
  \[C_V =\LEBEG{P_{V^\bot}\left(\LS{\|.\|}{1}\right)}{N-K}
  \LEBEG{V\cap\LS{f_d}{1}}{K},
  \]
  where $P_{V^\bot}$ is the orthogonal projection onto the
  orthogonal complement of $V$, $\|.\|$ is the norm defining the data
  fidelity term in $(\P_d)$ and $\LEBEG{.}{k}$ denotes the Lebesgue
  measure of a set living in $\RR^k$.
\item Once this is achieved, we easily obtain lower and upper bounds
  for $\PROBA{\val(\P_d)\leq K}$, $\PROBA{\val(\P_d) = K}$ when $d$ is
  uniformly distributed in $\LS{f_d}{\theta}$ (see Section
  \ref{stat}). They have the same characteristics as the bounds
  described above (modulo the disappearance of $\theta^N$). In order to obtain sparse representations of the
  data, we should therefore tune the model (the norm $\|.\|$ and the
  dictionary $(\psi_i)_{i\in I}$) in order to obtain larger constants $\CK{K}$.

  This result clearly shows that the model $(\P_d)$ benefits from several
  ingredient (which might not be present in other models promoting sparsity):
  \begin{itemize}
  \item the sum defining $\CK{K}$ is for \underline{all} the possible
    vector subspaces of dimension $K$ spanned by elements of the
    dictionary $(\psi_i)_{i\in I}$.
  \item the term  $\LEBEG{V\cap\LS{f_d}{1}}{K}$ in in the constants $C_V$ represents
    the measure of the \underline{whole} set
    $V\cap\LS{f_d}{1}$.
  \end{itemize}
\item Finally we estimate $\EXPECT{\val(\P_d)}$ and show that its
  asymptotic (when $\frac{\tau}{\theta}$ goes to $0$) is
  governed by the constant $\CK{N-1}$ (see Theorem
  \ref{thmexpect}). Increasing this constant therefore seems to be
  particularly important when building a model $(\P_d)$ (i.e. choosing
  $\|.\|$ and $(\psi_i)_{i\in I}$).
\end{itemize}

These results are illustrated in the context of particular choice for $\|.\|$ and for
$f_d$ in Section \ref{illustre-sec}.

\subsection{Relation to other evaluations of performance}

Evaluating the performance of an optimization problem like  $(\P_d)$ for the purpose
of realizing \underline{nonlinear approximation} is a very active firld of research.
For a good survey of the problem we refer to \cite{Devore98}.

In that field of research a variant of $(\P_d)$, named ``best K-term
approximation'', is under study. It consists in looking for the best
possible approximation of a datum $d\in\RR^N$ using an expansion in
$(\psi_i)_{i\in I}$ with $K$ non-zero coordinates. The performance of
the model is estimated using the quantity
\[\sigma_K(d) = \inf_{S\in \Sigma_K} \|d - S\|,
\]
where $\Sigma_K$ denotes the union of all the vector spaces of
dimension $K$ spanned by elements of $(\psi_i)_{i\in I}$, for
$K=0,\ldots, N$. Expressed with our notations, the typical object under
consideration is\footnote{In Nonlinear approximation authors usually
  consider infinite dimensional spaces.}
\[{\mathcal A}^\alpha(C) = \bigcup_{K=1}^N \ASTKE{K}{\frac{C}{K^\alpha}},
\]
for $C>0$ and $\alpha>0$ and $\ASTKE{K}{\tau}$ defined by \eqref{poatnb}. That is the data $d$ obeying
\[\sigma_K(d) \leq \frac{C}{K^\alpha}~~~~\mbox{, for all
}K=1,\ldots,N.
\]

The typical results obtained there take the form
\begin{equation}\label{aerbon}
{\mathcal A}^\alpha(C_1) \subset {\mathcal K}_{\eta}\subset{\mathcal
  A}^\alpha(C_2),
\end{equation}
for $C_2 \geq C_1>0$ and the level set
\[{\mathcal K}_{\eta} = \{d\in \RR^N, \|d\|_\eta \leq 1\},
\]
for a norm $\|.\|_\eta$ characterizing the regularity of $d$ (again,
the theory is in infinite dimensional vector spaces). This permits to
estimate the number of coordinates which are needed to represent a datum
$d$, if we know its regularity. Typically, the link between $\alpha$
and $\eta$ says how good is the basis (or more generally a dictionary) at representing
the data class.

The clear advantage of these results over ours is that they apply even
if one only has a vague knowledge of the data distribution. For
instance, any data distribution whose support is included in
${\mathcal K}_{\eta}$ does enjoy the decay $\frac{C_2}{K^\alpha}$. The
inclusions in \eqref{aerbon} need indeed to be true for the worse
elements of ${\mathcal K}_{\eta}$ (even if they are rare). The
counterpart of this advantage is that the constants $C_1$ and $C_2$
might be pessimistic.


Finally, as far as we know, the analysis proposed in Nonlinear
approximation does not permit (today) to clearly assess the
differences between $(\P_d)$ and its heuristics (in particular Basis
Pursuit Denoising \cite{ChenDonoho} and Orthogonal Matching Pursuit
\cite{pati93OMP}). This is a clear advantage of the method for
assessing model performances proposed in this paper. Indeed,
similar analysis have already been conducted in
\cite{Malgouyres06a,Malgouyres-compres-estim-proof,MalgouyresSigPro}
in the context of the compression scheme described in
\cite{MalgouyresCompression}, Basis Pursuit Denoising and total
variation regularization. (However, concerning the papers on Basis Pursuit Denoising and the
total variation regularization, the results are stated for another
asymptotic and the analysis partly needs to be rewritten in the
proper context.)

\subsection{Notations}

For any function $f:\RR^N\to\RR$, and any
$\theta\in\RR$, the $\theta$-level set of $f$ is denoted by
\begin{equation}\label{ls}
\LS{f}{\theta} = \{w\in\RR^N, f(w)\leq \theta\}.
\end{equation}
For any vector subspace $V$ of $\RR^N$, we denote $P_V$ the
orthogonal projection onto $V$ and by $V^\bot$ the orthogonal complement
of $V$ in $\RR^N$.
To specify the dimension of $V$, we write $\dim(V)$.
The Euclidean norm of an $u\in\RR^N$ is systematically denoted by
$\|u\|_2$. The notation $\|u\|$ is devoted to a general norm on $\RR^N$.
For any integer $K>0$, the Lebesgue measure on $\RR^K$ is
systematically denoted by~$\LEBEG{.}{K}$, whereas $I_K$ stands  for the $K\times K$ identity matrix.
We write $\PROBA{.}$ for probability and $\EXPECT{.}$ for expectation.

As usually, we write $o(t)$ for a function satisfying $\lim_{t\to 0}\frac{o(t)}{t}=0$.

For any $d\in\RR^N$, we denote $\val(\P_d)$ the value of the minimum in
$(\P_d)$---i.e. $\ell_0\left((\lambda_i)_{i\in I}\right)$ for
$(\lambda_i)_{i\in I}$ solving $(\P_d)$.

\section{Measuring bounded cylinder-like subsets of $\RR^N$}\label{mezcyl}

\subsection{Preliminary results}

Below we give  several statements that will be used many times in the rest of the work.

\begin{lemma}\label{lemme_h}
For any vector subspace $V\subset\RR^N$ and any norm $\|.\|$ on
$\RR^N$, define the application
\beqn h:V^\bot&\to&\RR\nonumber\\
u&\to&h(u)\defeq\inf\left\{t\geq 0:\frac{u}{t}\in P_{V^\bot}\left(\LS{\|.\|}{1}\right)\right\}.
\label{hu}
\eeqn
Then the following holds:
\bit
\item[(i)] For any $\tau\geq 0$,  we have
\beq\LS{h}{\tau}=P_{V^\bot}\left(\LS{\|.\|}{\tau}\right).
\label{h}\eeq

\item[(ii)] The application $h$  in \eqref{hu} is a norm on $V^\bot$.
\item[(iii)]   For any norm $f_d$ on $\RR^N$, let $\delta_1>0$, $\delta_2>0$ and $\odelta$ be some constants satisfying
\beqn w\in \RR^N&\Rightarrow&f_d(w)\leq \delta_1 \|w\|_2 \mbox{~ and ~}  \|w\|_2\leq \delta_2
  \|w\|,
\label{Y}\\
\odelta &\defeq& \delta_1\delta_2 \label{odelta}
\eeqn
The constants $\delta_1$, $\delta_2$ and $\odelta>0$ are independent of $V$ and we have
\beqn \label{major1}
    f_d(u) &\leq& \odelta h(u),~~~~~~\forall u\in V^\bot,
  \\
\label{major2}
    \|u\|_2 &\leq &\delta_2 h(u),~~~~~~\forall u\in V^\bot.
  \eeqn
\eit
\end{lemma}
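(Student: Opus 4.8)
The plan is to recognize $h$ as the Minkowski gauge of the compact symmetric convex set $C\defeq P_{V^\bot}(\LS{\|.\|}{1})\subset V^\bot$ and to deduce (i)--(iii) from the standard properties of such gauges. First I would record the elementary features of $C$: it is convex (the image of a convex set under the linear map $P_{V^\bot}$), compact (the image of the compact set $\LS{\|.\|}{1}$ under a continuous map), and symmetric, since $\|-w\|=\|w\|$ gives $-C=C$. Moreover $C$ contains a Euclidean ball of $V^\bot$ around the origin: if $\{w\in\RR^N:\|w\|_2\le r\}\subset\LS{\|.\|}{1}$ for some $r>0$ (such $r$ exists because all norms on $\RR^N$ are equivalent), then every $u\in V^\bot$ with $\|u\|_2\le r$ satisfies $u=P_{V^\bot}(u)\in C$. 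Consequently $h$ is finite on $V^\bot$, with $h(u)\le\|u\|_2/r$; and since $C$ is bounded, say $C\subset\{u:\|u\|_2\le R\}$, one also gets $h(u)\ge\|u\|_2/R$, so $h$ vanishes only at $0$.

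For (i), I would use that $\LS{\|.\|}{\tau}=\tau\,\LS{\|.\|}{1}$ for $\tau>0$ (homogeneity of a norm), whence by linearity of $P_{V^\bot}$ one gets $P_{V^\bot}(\LS{\|.\|}{\tau})=\tau C$. Because $C$ is closed, convex and contains $0$, for a fixed $u\neq 0$ the set $\{t>0:u/t\in C\}$ is exactly the closed half-line $[h(u),+\infty)$; hence $h(u)\le\tau\iff u/\tau\in C\iff u\in\tau C$, which is \eqref{h} for $\tau>0$ (the case $u=0$ being immediate since $0\in C$). For $\tau=0$ one has $\LS{h}{0}=\{u:h(u)=0\}=\{0\}$ by the lower bound $h(u)\ge\|u\|_2/R$ obtained above, and this equals $P_{V^\bot}(\LS{\|.\|}{0})=P_{V^\bot}(\{0\})=\{0\}$.

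For (ii), the norm axioms follow along the usual lines. Nonnegativity and $h(u)=0\iff u=0$ were just established. Positive homogeneity $h(\alpha u)=|\alpha|\,h(u)$ follows for $\alpha>0$ from the substitution $t\mapsto t/\alpha$ in the defining infimum, and for $\alpha<0$ additionally from $-C=C$. The triangle inequality is where convexity enters: if $u/s\in C$ and $v/t\in C$ with $s,t>0$, then $\frac{u+v}{s+t}=\frac{s}{s+t}\cdot\frac{u}{s}+\frac{t}{s+t}\cdot\frac{v}{t}\in C$, so $h(u+v)\le s+t$; taking the infimum over admissible $s$ and $t$ gives $h(u+v)\le h(u)+h(v)$.

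For (iii), note first that the constants $\delta_1,\delta_2>0$ of \eqref{Y} exist and are independent of $V$ because they only compare the three fixed norms $f_d$, $\|.\|_2$ and $\|.\|$ on $\RR^N$. Now fix $u\in V^\bot$; by part (i), $u\in\LS{h}{h(u)}=P_{V^\bot}(\LS{\|.\|}{h(u)})$, so there is $w\in\RR^N$ with $P_{V^\bot}(w)=u$ and $\|w\|\le h(u)$. Since an orthogonal projection does not increase the Euclidean norm, $\|u\|_2=\|P_{V^\bot}(w)\|_2\le\|w\|_2\le\delta_2\|w\|\le\delta_2\,h(u)$, which is \eqref{major2}; then $f_d(u)\le\delta_1\|u\|_2\le\delta_1\delta_2\,h(u)=\odelta\,h(u)$, which is \eqref{major1}. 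I do not anticipate a genuine obstacle: once $h$ is identified with the gauge of $C$, everything is classical, the only points demanding a little care being that $0$ be interior to $C$ relative to $V^\bot$ (so that $h$ is finite-valued and \eqref{h} is meaningful) and that the constants in \eqref{Y} be chosen uniformly in $V$, which holds by construction.
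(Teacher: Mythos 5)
Your proof is correct and follows essentially the same route as the paper: both identify $h$ as the Minkowski gauge of the convex, symmetric, closed set $P_{V^\bot}\left(\LS{\|.\|}{1}\right)$ with $0$ in its relative interior, and derive (i) and (ii) from the standard gauge properties. The only variation is in (iii), where you lift $u$ to a preimage $w$ with $\|w\|\leq h(u)$ and use pointwise that $P_{V^\bot}$ is a Euclidean contraction, whereas the paper phrases the same facts as set inclusions $\LS{h}{1}\subset\LS{\|.\|_2}{\delta_2}\cap V^\bot\subset\LS{f_d}{\odelta}\cap V^\bot$ and then converts them into gauge inequalities; your version is slightly more direct but rests on identical ingredients.
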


\begin{remark}
The constants in (\ref{Y}) come from the fact that all norms on  a finite-dimensional space are equivalent.
In practice we will choose the smallest constants satisfying these inequalities.
\label{findim}\end{remark}

\begin{proof}
The case $V=\{0\}$ is trivial (we obtain $h=\|.\|$) and we further assume that $\dim(V)\geq 1$.

\paragraph{\it Assertion (i).} The set $P_{V^\bot}(\LS{\|.\|}{1})$ is
convex since $\|.\|$ is a norm and $P_{V^\bot}$ is linear.
Moreover, the origin 0 belongs to its interior.
Indeed, there is $\eps>0$ such that if $w\in\RR^N$ satisfies
$\|w\|_2<\eps$, then $\|w\|<1$.
Consequently  $0\in \Int\big(\LS{\|.\|_2}{\eps}\big)\subset\LS{\|.\|}{1}$.
Using that $\|.\|_2$ is rotationally invariant and that $P_{V^\bot}$ is a contraction, we deduce
that $0\in \Int\big(P_{V^\bot}(\LS{\|.\|_2}{\eps})\big)\subset P_{V^\bot}(\LS{\|.\|}{1})$.
Then the application $h:V^\bot\to\RR$ in \eqref{hu} is the usual Minkowski functional
 of $P_{V^\bot}\left(\LS{\|.\|}{1}\right)$, as defined and commented in \cite[p.131]{Luenberger69}.
Since $P_{V^\bot}\left(\LS{\|.\|}{1}\right)$ is closed, we have
\[P_{V^\bot}\left(\LS{\|.\|}{1}\right)=\big\{ u\in V^\bot : h(u)\leq 1\big\}.\]
Using that the Minkowski functional is positively homogeneous---i.e.
$$h(\tau u)=\tau h(u),~~~\forall\tau>0,$$
lead to~(\ref{h}).

\paragraph{\it Assertion (ii).} For  $h$ to be a norm, we have to show that the latter property holds for
any $\lambda\in \RR$ (i.e.  that $h$ is symmetric with respect to the
origin). It is true since, for any $\lambda\in \RR$
\beqnn ~~~~~~~~~ h(\lambda u)&=&\inf\big\{t\geq0 : \lambda u\in P_{V^\bot}\left(\LS{\|.\|}{t}\right)\big\}\\
&=&\inf\big\{t\geq0 : u\in P_{V^\bot}\left(\LS{\|.\|}{\frac{t}{|\lambda|}}\right)\big\}\\
&=&|\lambda|\inf\big\{t\geq0 : u\in P_{V^\bot}\left(\LS{\|.\|}{t}\right)\big\}~~~~~~~~~\mbox{(writing $t$ for $t/|\lambda|$)}\\
&=&|\lambda|~ h(u),
\eeqnn
where we use the facts that $P_{V^\bot}$ is linear and that $\|.\|$ is
a norm.
It is well known that  the Minkowski functional is non negative, finite, and satisfies\footnotemark~
\footnotetext{For completeness, we give the details:
\beqnn h(u+v)&=&\inf\big\{t\geq0:(u+v)\in P_{V^\bot}\left(\LS{\|.\|}{t}\right)\big\}\\
&\leq&\inf\big\{t\geq0:u\in P_{V^\bot}\left(\LS{\|.\|}{t}\right)\big\}+
\inf\big\{t\geq0:v\in P_{V^\bot}\left(\LS{\|.\|}{t}\right)\big\}=h(u)+h(v).
\eeqnn
}
$h(u+v)\leq h(u)+h(v)$ for any $u,v \in V^\bot$.

Finally, since
  $\LS{h}{0}=P_{V^\bot}\left(\LS{\|.\|}{0}\right)=\{0\}$,
\[h(u)=0~~~\Leftrightarrow~~~u=0.
\]
Consequently, $h$ defines a norm on $V^\bot$.

\paragraph{\it Assertion (iii).} Let us first remark that
\[ \LS{\|.\|}{1} \subset \LS{\|.\|_2}{\delta_2} \subset
\LS{f_d}{\delta_1 \delta_2}=\LS{f_d}{\odelta},
\]
where $\delta_1$ and $\delta_2$ are defined in the proposition.
Using that $\|.\|_2$ is rotationally invariant, we have
\beqn
\LS{h}{1}=P_{V^\bot}\left(\LS{\|.\|}{1}\right) & \subset &
P_{V^\bot}\left(\LS{\|.\|_2}{\delta_2}\right) = \LS{\|.\|_2}{\delta_2} \cap V^\bot
\nonumber\\
& \subset & \LS{f_d}{\delta_1 \delta_2}  \cap V^\bot =\LS{f_d}{\odelta}  \cap V^\bot.
\nonumber\eeqn

We will prove \eqref{major1} and  \eqref{major2} jointly. To this end
let us consider a norm $g$ on $\RR^N$ and $\delta>0$ such that
\begin{equation}\label{Rgener}
P_{V^\bot}\left(\LS{\|.\|}{1}\right) \subset \LS{g}{\delta} \cap V^\bot.
\end{equation}
Using that each norm can be expressed as a Minkowski functional, for any $u\in V^\bot$ we can write down the following:
\beqn
g(u) & = & \inf\{t\geq 0 : g(\frac{u}{t}) \leq 1\} \nonumber\\
  & = & \inf\{t\geq 0 : g(\frac{\delta }{t} u) \leq \delta\} \nonumber\\
   & = & \delta \inf\{t\geq 0 : g(\frac{u}{t}) \leq \delta \} \nonumber
   ~~~~~~~~~~~~~\mbox{(write $t$ for $\frac{t}{\delta}$)}\\
  & = & \delta \inf\{t\geq 0 : \frac{u}{t} \in\LS{g}{\delta}\} \nonumber\\
  & \leq & \delta \inf\{t\geq 0 : \frac{u}{t}
  \in P_{V^\bot}\left(\LS{\|.\|}{1}\}\right) \label{Q}\\
  & \leq & \delta~ h(u),\nonumber
\eeqn
where the inequality in (\ref{Q}) comes from (\ref{Rgener}).

If we identify $g$ with $f_d$ and $\delta$ with $\odelta$, we obtain \eqref{major1}.
Similarly, identifying $g$ with $\|.\|_2$ and $\delta$ with $\delta_2$ yields  \eqref{major2}.
This concludes the proof.
\end{proof}

The next proposition addresses sets of $\RR^N$ bounded with the aid of $f_d$.
\begin{proposition}\label{propV}
For any vector subspace $V$ of $\RR^N$, any norm $\|.\|$ on $\RR^N$ and any $\tau>0$, define
\beq V^\tau =  V+ P_{V^\bot}\left(\LS{\|.\|}{\tau}\right).
\label{Vtau}\eeq
Then the following hold:
\bit
\item[(i)] $V^\tau$ is closed and measurable;

\item[(ii)] Let $f_d$ be any norm on $\RR^N$, $h:V^\bot\to\RR$ the norm defined in Lemma \ref{lemme_h},
$K=\dim(V)$
and $\delta_V$ be any constant  such that
\beq f_d(u)\leq\delta_{_V} h(u),~~~\forall u\in V^\bot.\label{dV}\eeq
If ~$\theta\geq\delta_{_V}\tau$, then
\beq C\tau^{N-K}(\theta-\delta_{_V}\tau)^K\leq
\LEBEG { V^\tau \cap \LS{f_d}{\theta}}{N}\leq C\tau^{N-K}(\theta+\delta_{_V}\tau)^K,\label{encad}\eeq
where
\beq C=\LEBEG{P_{V^\bot}\left(\LS{\|.\|}{1}\right)}{N-K} ~\LEBEG{V\cap\LS{f_d}{1}}{K}
~\in~ (0,+\infty).\label{conC}
\eeq
\eit
\end{proposition}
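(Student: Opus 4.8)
The plan is to establish the measurability and closedness in part (i) directly from the definition \eqref{Vtau}, and then to reduce the volume computation in part (ii) to an integral over $V^\bot$ via Fubini's theorem, exploiting the product-like (``cylinder'') structure of $V^\tau$.

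\textbf{Part (i).} First I would observe that $P_{V^\bot}(\LS{\|.\|}{\tau})$ is a compact convex subset of $V^\bot$ (continuous image of the compact set $\LS{\|.\|}{\tau}$), hence closed in $V^\bot$ and therefore closed in $\RR^N$. Writing $\RR^N = V \oplus V^\bot$, the set $V^\tau = V + P_{V^\bot}(\LS{\|.\|}{\tau})$ is exactly the preimage $P_{V^\bot}^{-1}\big(P_{V^\bot}(\LS{\|.\|}{\tau})\big)$, i.e. a ``slab'' that is the product (under this orthogonal decomposition) of all of $V$ with a compact convex set in $V^\bot$. As the preimage of a closed set under the continuous linear map $P_{V^\bot}$, it is closed, hence Borel, hence Lebesgue measurable.

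\textbf{Part (ii).} The core is the volume estimate. Using the isometric identification $\RR^N \cong V \times V^\bot$ (choose orthonormal bases of $V$ and of $V^\bot$), Lebesgue measure factors as $\LEBEG{\cdot}{N} = \LEBEG{\cdot}{K} \otimes \LEBEG{\cdot}{N-K}$ with $K = \dim V$. For a fixed $u \in V^\bot$, the slice $\{v \in V : v + u \in V^\tau \cap \LS{f_d}{\theta}\}$ is nonempty iff $u \in P_{V^\bot}(\LS{\|.\|}{\tau}) = \LS{h}{\tau}$ (by Lemma \ref{lemme_h}(i) and homogeneity), and in that case it equals $\{v \in V : f_d(v+u) \le \theta\}$. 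The plan is to sandwich this slice. Using the triangle inequality for $f_d$ together with the bound \eqref{dV}, namely $f_d(u) \le \delta_{_V} h(u) \le \delta_{_V}\tau$ for $u \in \LS{h}{\tau}$, one gets the two-sided inclusion
\beq
\big\{v \in V : f_d(v) \le \theta - \delta_{_V}\tau\big\} \subset \big\{v \in V : f_d(v+u) \le \theta\big\} \subset \big\{v \in V : f_d(v) \le \theta + \delta_{_V}\tau\big\}.
\eeq
Here I use $f_d(v) \le f_d(v+u) + f_d(u)$ for the right inclusion and $f_d(v+u) \le f_d(v) + f_d(u)$ for the left, noting that the left inclusion needs $\theta - \delta_{_V}\tau \ge 0$, which is the hypothesis $\theta \ge \delta_{_V}\tau$. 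Since $f_d$ restricted to $V$ is a norm on the $K$-dimensional space $V$, homogeneity gives $\LEBEG{\{v \in V : f_d(v) \le s\}}{K} = s^K \LEBEG{V \cap \LS{f_d}{1}}{K}$ for $s \ge 0$. Integrating the slice volumes over $u \in \LS{h}{\tau} \subset V^\bot$ via Fubini, and using $\LEBEG{\LS{h}{\tau}}{N-K} = \tau^{N-K}\LEBEG{P_{V^\bot}(\LS{\|.\|}{1})}{N-K}$ (again Lemma \ref{lemme_h}(i) plus homogeneity of the norm $h$), yields exactly \eqref{encad} with $C$ as in \eqref{conC}.

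\textbf{Positivity and finiteness of $C$.} It remains to check $C \in (0,+\infty)$. Finiteness is immediate since both $\LS{\|.\|}{1}$ and $V \cap \LS{f_d}{1}$ are bounded (all norms on finite-dimensional spaces are equivalent, cf. Remark \ref{findim}). Positivity of $\LEBEG{V\cap\LS{f_d}{1}}{K}$ holds because $V\cap\LS{f_d}{1}$ is the unit ball of a norm on the $K$-dimensional space $V$, hence has nonempty interior in $V$. Positivity of $\LEBEG{P_{V^\bot}(\LS{\|.\|}{1})}{N-K}$ was already shown in the proof of Lemma \ref{lemme_h}(i): $0$ lies in the interior (in $V^\bot$) of $P_{V^\bot}(\LS{\|.\|}{1})$.

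\textbf{Main obstacle.} The only genuinely delicate point is bookkeeping the dimensions and the measure factorization cleanly: one must be careful that ``$\LEBEG{\cdot}{K}$'' and ``$\LEBEG{\cdot}{N-K}$'' refer to Lebesgue measure on $V$ and $V^\bot$ via orthonormal coordinates, so that the product decomposition of $\LEBEG{\cdot}{N}$ is genuinely an isometry and no Jacobian factors appear. Everything else is a routine application of Fubini, the triangle inequality, and homogeneity of norms.
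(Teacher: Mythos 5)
Your proof is correct and follows essentially the same route as the paper: the heart of both arguments is the same sandwich, obtained from the triangle inequality and the bound $f_d(u)\leq\delta_{_V}h(u)\leq\delta_{_V}\tau$ for $u\in\LS{h}{\tau}$, squeezing $V^\tau\cap\LS{f_d}{\theta}$ between the two ``cylinders'' $\{u+v: h(u)\leq\tau,\ f_d(v)\leq\theta\mp\delta_{_V}\tau\}$. The only (cosmetic) difference is that you evaluate the volumes of these product sets by Fubini slicing and homogeneity of the norms, whereas the paper parametrizes them by a linear Lipschitz homeomorphism from $\LS{h}{1}\times\big(V\cap\LS{f_d}{1}\big)$ and computes a Jacobian; both yield the same constant $C$.
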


\begin{remark}
Using Lemma  \ref{lemme_h}, the condition in (\ref{dV}) holds  for any $\delta_V\geq\delta_V^*$ with
$\delta_V^*\in [0,\odelta]$, where $\odelta$ is given in \eqref{odelta}.
Let us emphasize that $\delta_V$ may depend on $V$ (which explains the letter ``V'' in index).
The proposition clearly holds if we take  $\delta_V= \odelta$---the constant of Lemma \ref{lemme_h}, assertion (iii), which is
independent of the choice of~$V$.

Observe that $C$ is a positive, finite constant that depends only on $V$, $\|.\|$ and $f_d$.
\end{remark}
\begin{remark}
An important consequence of this proposition is that asymptotically
\[\LEBEG{V^\tau \cap \LS{f_d}{\theta}}{N} = C \theta^N \left(\frac{\tau}{\theta}\right)^{N-K}
+ \theta^N
o\left(\left(\frac{\tau}{\theta}\right)^{N-K}\right)~~~\mbox{if}~~~\frac{\tau}{\theta}\to
0.
\]
\end{remark}

\begin{proof}
The sets $V$ and $P_{V^\bot}\left(\LS{\|.\|}{\tau}\right)$ are
closed. Moreover, $V$ and $P_{V^\bot}\left(\LS{\|.\|}{\tau}\right)$
are orthogonal. Therefore $V^\tau$ is closed. As a consequence
$V^\tau$ is a Borel set and is Lebesgue measurable.

Since the restriction of $f_d$ to $V^\bot$ is a norm on $V^\bot$,
there exists $\delta_V$ such that (see Remark \ref{findim})
\beq f_d(u)\leq\delta_{_V} h(u),~~~\forall u\in
V^\bot,\label{delV}\eeq
where $h$ is given in (\ref{h}) in Lemma \ref{lemme_h}.
By (\ref{major1}) in Lemma \ref{lemme_h}, such a $\delta_V$ exists in $[0,\odelta]$.
To simplify the notations, in the rest of the proof we will write $\delta$ for $\delta_{_V}$.

For any $u\in V^\bot$ and $v\in V$, using \eqref{delV} we have
\[ f_d(v)-\delta h(u) \leq f_d(v)-f_d(u)\leq f_d(u+v)\leq f_d(v)+f_d(u)\leq f_d(v)+\delta h(u)
\]
In particular, for $h(u)\leq\tau$, we get
\beq f_d(v)-\delta \tau \leq f_d(u+v)\leq f_d(v)+\delta \tau.
\label{in}\eeq
As required in assertion (ii), we have $\theta-\delta\tau\geq 0$.
If in addition $v\in V$ is such that $f_d(v)\leq\theta-\delta\tau$,
then $f_d(u+v)\leq\theta$.
Noticing that
$$\LS{f_d}{\theta}=\big\{u+v:(u,v)\in (V^\bot\!\!\times V),~f_d(u+v)\leq\theta\big\},$$
this implies that
\[B_0\defeq\big\{u+v:(u,v)\in (V^\bot\!\!\times V), ~h(u)\leq\tau,~f_d(v)\leq\theta-\delta\tau\big\}~
\subseteq ~V^\tau \cap \LS{f_d}{\theta}.
\]
Using that $f_d(u+v)\leq\theta$ (see the set we wish to measure in \eqref{encad}),
then the left-hand side of (\ref{in}) shows that
$f_d(v)\leq\theta+\delta\tau$, hence
\[B_1\defeq\big\{u+v:(u,v)\in (V^\bot\!\!\times V), ~h(u)\leq\tau,~f_d(v)\leq\theta+\delta\tau\big\}~
\supseteq ~V^\tau \cap \LS{f_d}{\theta}.
\]

Consider the pair of applications
\beqnn \varphi_0:\LS{h}{1}\times \left(V\cap\LS{f_d}{1}\right)&\to& \RR^N\\
(u,v)&\to&\tau u+(\theta-\delta\tau)v
\eeqnn
and
\beqnn \varphi_1:\LS{h}{1}\times \left(V\cap\LS{f_d}{1}\right)&\to& \RR^N\\
(u,v)&\to&\tau u+(\theta+\delta\tau)v
\eeqnn
Clearly, $\varphi_i$ is a Lipschitz homeomorphism satisfying
$\varphi_i\Big(\LS{h}{1}\times \big(V\cap\LS{f_d}{1}\big)\Big)=B_i$ for $i\in\{0,~1\}$.
Moreover, we have
\[D\varphi_0=\left[\barr{cc}\tau I_{N-K}&0\\ 0& (\theta-\delta\tau)I_K\earr\right]~~~\mbox{and}~~~
D\varphi_1=\left[\barr{cc}\tau I_{N-K}&0\\ 0& (\theta+\delta\tau)I_K\earr\right].\]
Then $\LEBEG {B_i}{N} $ can be computed using (see \cite{Evans92} for details)
\[\LEBEG {B_i}{N} =\int_{u\in\LS{h}{1}}\int_{v\in V\cap\LS{f_d}{1}}\JACOB{\varphi_i}dvdu,\]
where $\JACOB{\varphi_i}$ is the Jacobian of  $\varphi_i$, for $i=0$ or $i=1$.
In particular,
\beqnn \JACOB{\varphi_0}=\det \big( D\varphi_0\big)=\tau^{N-K}(\theta-\delta\tau)^K,\\
\JACOB{\varphi_1}=\det\big( D\varphi_1\big)=\tau^{N-K}(\theta+\delta\tau)^K.
\eeqnn
It follows that
\[\LEBEG {B_0}{N}=C\tau^{N-K}(\theta-\delta\tau)^K~~~\mbox{and}~~~
\LEBEG {B_1}{N}=C\tau^{N-K}(\theta+\delta\tau)^K\]
where the constant
\beqnn C&=&\int_{\LS{h}{1}}du\int_{V\cap\LS{f_d}{1}}dv\\
&=&\LEBEG{P_{V^\bot}\left(\LS{\|.\|}{1}\right)}{N-K} ~\LEBEG{V\cap\LS{f_d}{1}}{K}.
\eeqnn
Clearly $C$ is positive and finite.
Using the inclusion $B_0\subseteq ~V^\tau \cap \LS{f_d}{\theta}\subseteq B_1$ shows that
\[C\tau^{N-K}(\theta-\delta\tau)^K\leq
\LEBEG { V^\tau \cap \LS{f_d}{\theta}}{N}\leq C\tau^{N-K}(\theta+\delta\tau)^K.\]
The proof is complete.
\end{proof}

\subsection{Sets built from a dictionary}\label{CBD}

With every $J\subset I$, we associate the vector subspace $\TTJ{J}$ defined below:
\beq\TTJ{J}\defeq \Span\left((\psi_j)_{j\in J}\right),
\label{span}\eeq
along with the convention $\Span(\emptyset) = \{0\}$.
Given an arbitrary $\tau>0$, we introduce the subset of $\RR^N$
\beq\TTT{J}{\tau}\defeq \TTJ{J} + P_{\TOJ{J}}\left(\LS{\|.\|}{\tau}\right),
\label{TTJT}\eeq
where we recall that $\TOJ{J}$ is the orthogonal complement of
$\TTJ{J}$ in $\RR^N$ and $\|.\|$ is any norm on $\RR^N$.
These notations are constantly used in what follows.

The next assertion is a direct consequence of Proposition 1. The proposition is
illustrated on Figure \ref{dessin1}.

\DESSINUN

\begin{corollary}\label{rmkTJ}
For any $J\subset I$ (including $J=\emptyset$), any norm $\|.\|$ and any
$\tau>0$ the following hold:
\bit
\item[(i)] $\TTT{J}{\tau}$ is closed and measurable;

\item[(ii)] Let  $f_d$ be any norm on $\RR^N$ and $K\defeq\dim(\TTJ{J})$.
Then there exists $\delta_J\in[0,\odelta]$ (where $\odelta$ is given in Lemma \ref{lemme_h}(iii))
such that for $\theta\geq\delta_J\tau$ we have
\beq  C_J\tau^{N-K}(\theta-\delta_J\tau)^K~\leq~
\LEBEG { \TTT{J}{\tau}\cap \LS{f_d}{\theta}}{N}~\leq~ C_J\tau^{N-K}(\theta+\delta_J\tau)^K, \label{used}\eeq
where
\beq C_J=
\LEBEG{P_{\TTT{J}{\bot}}\left(\LS{\|.\|}{1}\right)}{N-K} ~ \LEBEG{\TTJ{J}\cap\LS{f_d}{1}}{K}
\in (0,+\infty).\label{used1}
\eeq
\eit
\end{corollary}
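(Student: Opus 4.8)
The plan is to observe that Corollary~\ref{rmkTJ} is nothing but Proposition~\ref{propV} applied to the particular vector subspace $V = \TTJ{J}$. First I would note that by definition \eqref{span}, $\TTJ{J} = \Span((\psi_j)_{j\in J})$ is a genuine vector subspace of $\RR^N$ (with the convention that it equals $\{0\}$ when $J=\emptyset$, which is covered by the trivial case $V=\{0\}$ already handled in Lemma~\ref{lemme_h} and Proposition~\ref{propV}). Setting $K \defeq \dim(\TTJ{J})$, comparing \eqref{TTJT} with \eqref{Vtau} shows that $\TTT{J}{\tau}$ is exactly $V^\tau$ for $V = \TTJ{J}$, since $\TOJ{J} = \TTJ{J}^\bot = V^\bot$. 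So both the object to be measured and the constant $C_J$ in \eqref{used1} match $V^\tau$ and $C$ in \eqref{conC} verbatim.

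Concretely, the proof has three short steps. Step one: apply Proposition~\ref{propV}(i) with $V=\TTJ{J}$ to conclude that $\TTT{J}{\tau}$ is closed and measurable, giving assertion (i). Step two: invoke Lemma~\ref{lemme_h}(iii) and \eqref{major1} — with $h$ the Minkowski-functional norm on $V^\bot = \TOJ{J}$ attached to $V = \TTJ{J}$ — to produce a constant $\delta_J \defeq \delta_{\TTJ{J}}^* \in [0,\odelta]$ satisfying $f_d(u) \le \delta_J h(u)$ for all $u \in \TOJ{J}$; this is precisely condition \eqref{dV}. Here it is worth stressing, as the surrounding remarks do, that $\odelta$ does not depend on $J$, so $\delta_J$ can always be taken to be the uniform constant $\odelta$ if one does not want the sharper, $J$-dependent choice. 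Step three: feed $\delta_J$ and the hypothesis $\theta \ge \delta_J \tau$ into Proposition~\ref{propV}(ii); the two-sided estimate \eqref{encad} becomes exactly \eqref{used}, and the constant $C$ in \eqref{conC} becomes $C_J$ in \eqref{used1} upon substituting $V = \TTJ{J}$, $V^\bot = \TOJ{J}$, and using that $C \in (0,+\infty)$.

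There is essentially no obstacle here: the corollary is a direct specialization, and the only thing to be careful about is bookkeeping — confirming that the notation $\TOJ{J}$ for the orthogonal complement of $\TTJ{J}$ lines up with the generic $V^\bot$, that the dimension bookkeeping $K = \dim(\TTJ{J})$ is consistent throughout (in particular $N-K = \dim(\TOJ{J})$), and that the edge case $J = \emptyset$, i.e. $\TTJ{J} = \{0\}$ and $K=0$, is legitimately included (it is, since Proposition~\ref{propV} is stated for \emph{any} vector subspace $V$, including $V=\{0\}$, and Lemma~\ref{lemme_h} explicitly treats that case). Hence the "proof" is simply: \emph{Apply Proposition~\ref{propV} with $V = \TTJ{J}$, using Lemma~\ref{lemme_h}(iii) to obtain the constant $\delta_J \in [0,\odelta]$ of \eqref{dV}.}
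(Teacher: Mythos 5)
Your proposal is correct and matches the paper's own proof, which simply states that the corollary is a direct consequence of Proposition~\ref{propV} applied to $V=\TTJ{J}$, with $\delta_J$ denoting the constant from Lemma~\ref{lemme_h}. Your more explicit bookkeeping of the substitutions and the $J=\emptyset$ edge case is a faithful elaboration of that same argument.
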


\begin{proof}
The corollary is a direct consequence of Proposition
\ref{propV}. Notice that we now write $\delta_J$ for the constant $\delta_\TTJ{J}$ in Lemma \ref{lemme_h}.

\end{proof}
It can be useful to remind that $\odelta$ is defined in
Lemma \ref{lemme_h} and only depends on $\|.\|$ and $f_d$.

A more friendly expression for $\TTT{J}{\tau}$ is provided by the
lemma below. Again, the lemma is illustrated on Figure \ref{dessin1}.

\begin{lemma}\label{+ball}
For any $J\subset I$ (including $J=\emptyset$), any norm $\|.\|$ and
$\tau>0$ let $\TTT{J}{\tau}$ be defined by (\ref{TTJT}). Then
$$\TTT{J}{\tau} = \TTJ{J} + \LS{\|.\|}{\tau}.$$
\end{lemma}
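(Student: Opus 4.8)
The goal is to show $\TTT{J}{\tau} = \TTJ{J} + \LS{\|.\|}{\tau}$, where by definition $\TTT{J}{\tau} = \TTJ{J} + P_{\TOJ{J}}\left(\LS{\|.\|}{\tau}\right)$. Write $V = \TTJ{J}$ for brevity, so that $\TOJ{J} = V^\bot$ and we must prove
\[ V + P_{V^\bot}\left(\LS{\|.\|}{\tau}\right) = V + \LS{\|.\|}{\tau}. \]
The plan is to prove the two inclusions separately, and the key observation driving both is that any $w\in\RR^N$ decomposes uniquely as $w = P_V(w) + P_{V^\bot}(w)$ with $P_V(w)\in V$ and $P_{V^\bot}(w)\in V^\bot$, and that adding an arbitrary element of $V$ absorbs the $P_V(w)$ component.

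For the inclusion $\supseteq$: take $x\in V + \LS{\|.\|}{\tau}$, so $x = v + w$ with $v\in V$ and $\|w\|\le\tau$. Decompose $w = P_V(w) + P_{V^\bot}(w)$. Then $x = (v + P_V(w)) + P_{V^\bot}(w)$, where $v + P_V(w)\in V$ and $P_{V^\bot}(w)\in P_{V^\bot}\left(\LS{\|.\|}{\tau}\right)$ since $\|w\|\le\tau$. Hence $x\in V + P_{V^\bot}\left(\LS{\|.\|}{\tau}\right) = \TTT{J}{\tau}$.

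For the inclusion $\subseteq$: take $x\in \TTT{J}{\tau} = V + P_{V^\bot}\left(\LS{\|.\|}{\tau}\right)$, so $x = v + P_{V^\bot}(w)$ with $v\in V$ and $\|w\|\le\tau$. We want to realize $x$ as (element of $V$) + (element of $\LS{\|.\|}{\tau}$). This is where a little more than the trivial decomposition is needed: $P_{V^\bot}(w)$ itself need not lie in $\LS{\|.\|}{\tau}$ for the norm $\|.\|$ (the orthogonal projection is a contraction only for $\|.\|_2$, not necessarily for $\|.\|$). The way around this is to observe that $P_{V^\bot}\left(\LS{\|.\|}{\tau}\right) = \LS{h}{\tau}$ by Lemma \ref{lemme_h}(i), but more directly: one shows $P_{V^\bot}\left(\LS{\|.\|}{\tau}\right) \subseteq V + \LS{\|.\|}{\tau}$ by noting that for $\|w\|\le\tau$ we have $P_{V^\bot}(w) = \big(-P_V(w)\big) + w$ with $-P_V(w)\in V$ and $w\in\LS{\|.\|}{\tau}$. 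Therefore $x = v + P_{V^\bot}(w) = \big(v - P_V(w)\big) + w \in V + \LS{\|.\|}{\tau}$, since $v - P_V(w)\in V$. This establishes $\subseteq$ and completes the proof.

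The main (and only mild) obstacle is the point just flagged: resisting the temptation to claim $P_{V^\bot}\left(\LS{\|.\|}{\tau}\right)\subseteq\LS{\|.\|}{\tau}$, which is false for a general norm. Once one writes the orthogonal projection additively as $P_{V^\bot}(w) = w - P_V(w)$ and throws the unwanted $P_V(w)$ term into the $V$-summand, both inclusions are immediate. No calculation beyond these bookkeeping steps is required.
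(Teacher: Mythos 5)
Your proof is correct and follows essentially the same route as the paper's: both directions rest on writing $P_{V^\bot}(w)=w-P_V(w)$ and absorbing the unwanted component into the $V$-summand. Your version is if anything slightly cleaner, since the paper's forward inclusion needlessly splits off the case $\|u\|\leq\tau$ before applying the same identity.
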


\begin{proof} The case $J=\emptyset$  is trivial  because of
  the convention $\Span(\emptyset) = \{0\}$. Consider next that $J$ is nonempty.
Let $w\in\TTT{J}{\tau}$, then $w$ admits a unique decomposition as
\[w=v+u~~~\mbox{where}~~ v\in\TTJ{J}~~\mbox{and}~~~u\in\TOJ{J}.\]
If $\|u\|\leq\tau$ then clearly $w\in\TTJ{J} + \LS{\|.\|}{\tau}$.
Consider next that $\|u\|>\tau$. From the definition of $\TTT{J}{\tau}$, there exists $w_u\in\LS{\|.\|}{\tau}$
such that $P_{\TOJ{J}}(w_u)=u$.
Noticing that $u-w_u = P_{\TOJ{J}}(w_u)-w_u\in\TTJ{J}$ and that $v+u-w_u\in\TTJ{J}$,
we can see that
\beqnn w&=&(v+u-w_u)+w_u \\ &\in&~~~~~~~ \TTJ{J}~~~~~~+ \LS{\|.\|}{\tau}.\eeqnn

Conversely, let $w\in\TTJ{J} + \LS{\|.\|}{\tau}$. Then
\[w=v_1+v~~~\mbox{where}~~~v_1\in\TTJ{J}~~\mbox{and}~~v\in\LS{\|.\|}{\tau}.\]
Furthermore, $v$ has a unique decomposition of the form
\[v=v_2+u~~~\mbox{where}~~ v_2\in\TTJ{J}~~\mbox{and}~~u\in\TOJ{J}.\]
In particular, \[u=P_{\TOJ{J}}(v)\in P_{\TOJ{J}}\left(\LS{\|.\|}{\tau}\right)\]
Combining this with the fact that $v_1+v_2\in\TTJ{J}$ shows that
$w=(v_1+v_2)+u\in\TTT{J}{\tau}$. 
\end{proof}

\section{The intersection of two cylinder-like subsets is small}\label{intcyl}

\INTERSECTION

This section is devoted to prove quite an intuitive result on the estimate of the intersection of two
sets $\TTT{J}{\tau}$.
It uses all notations introduced in \S\ref{CBD} and is illustrated on Figure
\ref{intersection}.

\begin{proposition}\label{basic}
Let $J_1\subset I$ and $J_2\subset I$ be such that $\TTJ{J_1} \neq \TTJ{J_2}$ and
$\dim(\TTJ{J_1})=\dim(\TTJ{J_2})\defeq  K$.
Let $\tau>0$ and $\theta>0$.
Then the set given below
  \beq\TTT{J_1}{\tau} \cap \TTT{J_2}{\tau} \cap  \LS{f_d}{\theta}
 \label{TTTJ} \eeq
is closed and measurable.
Moreover, there is a constant ~$\delta_{\!J_1\!,\!J_2}\in[0,3\odelta]$
(where $\odelta$ is given in Lemma~\ref{lemme_h}(iii)) such that
for $\theta\geq \delta_{\!J_1\!,\!J_2}\tau$ we have
\[\LEBEG{\TTT{J_1}{\tau} \cap \TTT{J_2}{\tau} \cap \LS{f_d}{\theta}}{N}
    \leq Q_{\!J_1\!,\!J_2}\tau^{N-k}(\theta+\delta_{\!J_1\!,\!J_2}\tau)^k,~~~~
    k=\dim\big(\TTJ{J_1}\cap\TTJ{J_2}\big),
\]
where the constant $Q_{\!J_1\!,\!J_2}$ reads
\beq Q_{\!J_1\!,\!J_2}\defeq\LEBEG{W^\bot \cap \LS{\|.\|_2}{2\delta_2}}{N-k}~\LEBEG{W\cap  \LS{f_d}{1}}{k}
~~~~\mbox{for}~~~W\defeq\TTJ{J_1}\cap\TTJ{J_2}.\label{QJJ}\eeq
\end{proposition}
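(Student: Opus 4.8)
The plan is to work in the orthogonal splitting $\RR^N=W\oplus W^\bot$ with $W=\TTJ{J_1}\cap\TTJ{J_2}$ and $k=\dim W$ (here $k<K$, since $W\subset\TTJ{J_i}$ together with $\dim W=K=\dim\TTJ{J_i}$ would force $\TTJ{J_1}=W=\TTJ{J_2}$), to enclose the set \eqref{TTTJ} in a ``box'' of the form $\big(W^\bot\cap\LS{\|.\|_2}{2\delta_2\tau}\big)+\big(W\cap\LS{f_d}{\theta+\delta_{J_1,J_2}\tau}\big)$, and then to measure that box exactly by the same linear change of variables used in Proposition~\ref{propV}. Closedness and measurability come for free: $\TTT{J_1}{\tau}$ and $\TTT{J_2}{\tau}$ are closed by Corollary~\ref{rmkTJ}(i) and $\LS{f_d}{\theta}$ is closed since $f_d$ is continuous, so the intersection is closed, hence Borel, hence Lebesgue measurable.

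To build the enclosure I would take $w$ in the set \eqref{TTTJ}. Since $w\in\TTT{J_i}{\tau}=\TTJ{J_i}+P_{\TOJ{J_i}}\big(\LS{\|.\|}{\tau}\big)$, write $w=a_i+b_i$ with $a_i\in\TTJ{J_i}$ and $b_i=P_{\TOJ{J_i}}(z_i)$ for some $z_i\in\LS{\|.\|}{\tau}$; then $b_i\in\TOJ{J_i}$ and $\|b_i\|_2\le\|z_i\|_2\le\delta_2\|z_i\|\le\delta_2\tau$. Because $W\subset\TTJ{J_i}$ we have $\TOJ{J_i}\subset W^\bot$, so $b_i=P_{\TOJ{J_i}}(w)=P_{\TOJ{J_i}}(u)$ with $u:=P_{W^\bot}(w)$, giving $\|P_{\TOJ{J_1}}(u)\|_2\le\delta_2\tau$ and $\|P_{\TOJ{J_2}}(u)\|_2\le\delta_2\tau$. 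As $W^\bot=\TOJ{J_1}+\TOJ{J_2}$ (the orthogonal complement of an intersection is the sum of the orthogonal complements), I would combine these two one-sided bounds with the identity $a_1-a_2=b_2-b_1$ --- a vector of $\TTJ{J_1}+\TTJ{J_2}$ of Euclidean norm $\le2\delta_2\tau$ --- to conclude $\|u\|_2\le2\delta_2\tau$; this is the delicate point, and the place where $\TTJ{J_1}\neq\TTJ{J_2}$ is really used. On the $W$-side this then gives, via $f_d(u)\le\delta_1\|u\|_2\le2\odelta\tau$ and $P_W(w)=w-u$, the estimate $f_d(P_W(w))\le f_d(w)+f_d(u)\le\theta+2\odelta\tau$. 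Choosing $\delta_{J_1,J_2}\in[0,3\odelta]$ to dominate both $2\odelta$ and the constant from the $u$-estimate, one obtains
\[\TTT{J_1}{\tau}\cap\TTT{J_2}{\tau}\cap\LS{f_d}{\theta}\ \subset\ \big\{w:\ P_{W^\bot}(w)\in W^\bot\cap\LS{\|.\|_2}{2\delta_2\tau},\ P_W(w)\in W\cap\LS{f_d}{\theta+\delta_{J_1,J_2}\tau}\big\}.\]

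Then I would finish exactly as in Proposition~\ref{propV}: the right-hand set is the image of $\big(W^\bot\cap\LS{\|.\|_2}{2\delta_2}\big)\times\big(W\cap\LS{f_d}{1}\big)$ under the linear bijection $(u,v)\mapsto\tau u+(\theta+\delta_{J_1,J_2}\tau)v$, whose Jacobian is the constant $\tau^{N-k}(\theta+\delta_{J_1,J_2}\tau)^k$ (block diagonal, like the maps $\varphi_0,\varphi_1$ used there). The change-of-variables formula then yields Lebesgue measure $\tau^{N-k}(\theta+\delta_{J_1,J_2}\tau)^k\,\LEBEG{W^\bot\cap\LS{\|.\|_2}{2\delta_2}}{N-k}\,\LEBEG{W\cap\LS{f_d}{1}}{k}=Q_{J_1,J_2}\tau^{N-k}(\theta+\delta_{J_1,J_2}\tau)^k$, with $Q_{J_1,J_2}\in(0,+\infty)$ because both factors are bounded convex bodies of full dimension in $W^\bot$, resp.\ $W$. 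Monotonicity of Lebesgue measure under the displayed inclusion gives the asserted bound.

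The main obstacle is precisely the Euclidean estimate $\|u\|_2\le2\delta_2\tau$: one must convert ``$w$ lies within $\|\cdot\|$-distance $\tau$ of both $\TTJ{J_1}$ and $\TTJ{J_2}$'' into ``$P_{W^\bot}(w)$ is Euclidean-small'', i.e.\ control $\|u\|_2$ from $\|P_{\TOJ{J_1}}(u)\|_2$, $\|P_{\TOJ{J_2}}(u)\|_2$ and the vector relation $a_1-a_2=b_2-b_1$, using only the splitting $W^\bot=\TOJ{J_1}+\TOJ{J_2}$. Everything else is the measure-theoretic bookkeeping already developed in Section~\ref{mezcyl}.
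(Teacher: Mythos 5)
Your argument breaks at exactly the point you flag as delicate, and it cannot be repaired within the strategy you propose. The claim is that $\|u\|_2\le 2\delta_2\tau$ for $u=P_{W^\bot}(w)$ follows from $\|P_{\TOJ{J_1}}(u)\|_2\le\delta_2\tau$, $\|P_{\TOJ{J_2}}(u)\|_2\le\delta_2\tau$ and $W^\bot=\TOJ{J_1}+\TOJ{J_2}$. This is false: a vector of $\TOJ{J_1}+\TOJ{J_2}$ can have tiny projections onto both $\TOJ{J_1}$ and $\TOJ{J_2}$ while being large, namely when it is nearly orthogonal to both, and the relation $a_1-a_2=b_2-b_1$ gives no additional control. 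Concretely, take $N=2$, $\|.\|=f_d=\|.\|_2$ (so $\delta_2=1$), $\psi_1=(1,0)$, $\psi_2=(\cos\eps,\sin\eps)$, $J_i=\{i\}$; then $W=\{0\}$, $u=w$, and the point
\[
w=\frac{\tau}{\sin(\eps/2)}\,\bigl(\cos(\eps/2),\,\sin(\eps/2)\bigr)
\]
is at Euclidean distance exactly $\tau$ from each line $\TTJ{J_i}$, hence lies in $\TTT{J_1}{\tau}\cap\TTT{J_2}{\tau}$, yet $\|w\|_2=\tau/\sin(\eps/2)\to\infty$ as $\eps\to0$. So the proposed inclusion of the intersection into $W+\bigl(W^\bot\cap\LS{\|.\|_2}{2\delta_2\tau}\bigr)$ fails outright: the geometry of $\TTT{J_1}{\tau}\cap\TTT{J_2}{\tau}$ depends on the principal angles between $\TTJ{J_1}$ and $\TTJ{J_2}$, which is precisely the information your box (and the constant $\delta_2$) discards. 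The same two-strip picture shows the obstruction is intrinsic rather than technical: the parallelogram $\TTT{J_1}{\tau}\cap\TTT{J_2}{\tau}$ has area $4\tau^2/\sin\eps$, which for small $\eps$ exceeds $Q_{\!J_1\!,\!J_2}\tau^2=4\pi\tau^2$, so no enclosure whose constant depends only on $W$, $\delta_2$ and $f_d$ can deliver the stated bound.

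For comparison, the paper's proof does not take your shortcut. It decomposes $W^\bot$ as the direct sum of $\TOJ{J_1}\cap\TOJ{J_2}$, $\TOJ{J_1}\cap\TTJ{J_2}$ and $\TTJ{J_1}\cap\TOJ{J_2}$, defines on $W^\bot$ the norm $g(u)=\sup\{h_1(u_1+u_2),h_2(u_1+u_3)\}$ with $h_1,h_2$ the Minkowski functionals of Lemma \ref{lemme_h}, encloses the intersection in $W+P_{W^\bot}\bigl(\LS{\tilde g}{\tau}\bigr)$, and only afterwards compares $\LS{g}{1}$ with the Euclidean ball $\LS{\|.\|_2}{2\delta_2}$ using the mutual orthogonality of $u_1,u_2,u_3$. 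Your closedness/measurability argument and the final change-of-variables computation are fine and identical in spirit to Proposition \ref{propV}; the missing piece is a valid replacement for the Euclidean estimate on $P_{W^\bot}(w)$, and as the example above shows, any such replacement must involve the relative position of the two subspaces, not just their intersection $W$.
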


Notice that $Q_{\!J_1\!,\!J_2}$  depends only on $(\psi_j)_{j\in J_1}$
and $(\psi_j)_{j\in J_2}$, and the norms $\|.\|$ and $f_d$. A tighter
bound can be found in the proof of the proposition (see equation \eqref{Q'}). The bound is expressed in terms of a norm
$\tilde g$ constructed there.

\begin{remark}\label{nule}
Since $k=\dim W\leq K-1$, we have the following asymptotical result:
\beqnn\LEBEG{\TTT{J_1}{\tau} \cap \TTT{J_2}{\tau} \cap \LS{f_d}{\theta}}{N}
&\leq &Q_{\!J_1\!,\!J_2} ~\theta^N \left(\frac{\tau}{\theta}\right)^{N-k}
\left(1+\delta_{\!J_1,J_2} \frac{\tau}{\theta}\right)^{k} \\
&=&Q_{\!J_1\!,\!J_2} ~\theta^N \left(\frac{\tau}{\theta}\right)^{N-k}+
o\left(\left(\frac{\tau}{\theta}\right)^{N-k}\right)~~~~\mbox{as}~~\frac{\tau}{\theta}\to
0 \\
&=&\theta^N o\left(\left(\frac{\tau}{\theta}\right)^{N-K}\right)~~~~~~~~~~~~~~~~~~~~~~~~~\mbox{as}~~\frac{\tau}{\theta}\to
0.\eeqnn
\end{remark}

\begin{proof}
The subset in (\ref{TTTJ}) is closed and measurable, as being a
finite intersection of closed measurable sets.

Let
\[h_1:\TTT{J_1}{\bot}\to\RR~~~~\mbox{and}~~~~h_2:\TTT{J_2}{\bot}\to\RR\]
be the norms exhibited in Lemma \ref{lemme_h}---see equation \eqref{hu}---such that  for any $\tau\geq 0$,
\[\LS{h_1}{\tau}=P_{\TTJ{J_1^\bot}}\left(\LS{\|.\|}{\tau}\right)~~~~\mbox{and}~~~~
\LS{h_2}{\tau}=P_{\TTJ{J_2^\bot}}\left(\LS{\|.\|}{\tau}\right).\]
Reminding that by definition
\[W=\TTJ{J_1}\cap\TTJ{J_2},\]
De Morgan's law shows that
\[W^\bot=\TTT{J_1}{\bot}+\TTT{J_2}{\bot}.\]
Below we express the latter sum as a direct sum of subspaces:
\beqn W^\bot=\big(\TTT{J_1}{\bot}\cap\TTT{J_2}{\bot}\big)
&\oplus& \Big(\TTT{J_1}{\bot}\cap \TTJ{J_2} \Big)\label{AZ}\\
&\oplus& \Big(\;\TTJ{J_1}\;\cap \TTT{J_2}{\bot} \Big).\nonumber\eeqn
Notice that we have
\beq \barr{l}\TTT{J_1}{\bot}=\big(\TTT{J_1}{\bot}\cap\TTT{J_2}{\bot}\big)\oplus
\Big(\TTT{J_1}{\bot}\cap \TTJ{J_2}\Big),\\~\\
\TTT{J_2}{\bot}=\big(\TTT{J_1}{\bot}\cap\TTT{J_2}{\bot}\big)\oplus
\Big(\TTJ{J_1}\cap \TTT{J_2}{\bot}\Big),
\earr\label{a}\eeq
as well as
\beq\barr{l}\TTJ{J_1} = W \oplus \Big(\TTJ{J_1}\cap \TTT{J_2}{\bot}\Big),\\~
\\
\TTJ{J_2} = W \oplus \Big(\TTT{J_1}{\bot}\cap \TTJ{J_2}\Big).
\earr\label{b}\eeq
>From (\ref{AZ}), any $u\in W^\bot$ has a unique decomposition as
\beq u=u_1+u_2+u_3~~~~~~\mbox{where}~~~~~~ \barr{l}u_1\in \TTT{J_1}{\bot}\cap\TTT{J_2}{\bot}\\
                    u_2\in\TTT{J_1}{\bot} \cap \TTJ{J_2}\\
                    u_3\in\TTJ{J_1} ~\cap \TTT{J_2}{\bot}\earr
\label{decomp}\eeq

Using these notations, we introduce the following function:
\beqn g:W^\bot&\to&\RR \nonumber\\
u&\to& g(u)=\sup\big\{h_1(u_1+u_2),h_2(u_1+u_3)\big\}\label{g}.
\eeqn
In the next lines we show that $g$ is a norm on $W^\bot$:
\bit
\item $h_1$ and $h_2$ being norms, $g(\lambda u)=|\lambda| g(u)$, for all $\lambda\in\RR$;
\item if $g(u)=0$ then $u_1+u_2= u_1+u_3= 0$;
noticing that $u_1\bot u_2$ and that $u_1\bot u_3$ yields $u=0$;
\item for $u\in W^\bot$ and $v\in W^\bot$ (both decomposed according to (\ref{decomp})),
\beqnn g(u+v)&=&\sup\big\{h_1(u_1+u_2+v_1+v_2),h_2(u_1+u_3+v_1+v_3)\big\}\\
&\leq&\sup\big\{h_1(u_1+u_2)+h_1(v_1+v_2),h_2(u_1+u_3)+h_2(v_1+v_3)\big\}\\
&\leq&\sup\big\{h_1(u_1+u_2),h_2(u_1+u_3)\big\}+\sup\big\{h_1(v_1+v_2),h_2(v_1+v_3)\big\}\\
&=&g(u)+g(v).
\eeqnn
\eit
Furthermore, $g$ can be extended to a norm $\tilde g$ on $\RR^N$ such that
$\forall u\in W^\bot$, we have $\tilde g(u)=g(u)$ and
\begin{equation}\label{pjimied}
\LS{g}{\tau}=P_{W^\bot}\left(\LS{\tilde g}{\tau}\right),~~~~\forall \tau>0.
\end{equation}
Let us then define
\beqn  W^\tau &=&  W+ P_{W^\bot}\left(\LS{\tilde g}{\tau}\right) \nonumber\\
&=&\Big\{w+u:(u,w)\in ( W^\bot\!\!\times W),~g(u)\leq\tau \Big\}.\label{Wtau}
\eeqn
We are going to show that $\left(\TTT{J_1}{\tau} \cap\TTT{J_2}{\tau}\right)\subset W^\tau$.
In order to do so, we consider an arbitrary
\beq v\in\TTT{J_1}{\tau} \cap \TTT{J_2}{\tau}.\label{hawai}\eeq
It admits a unique decomposition of the form
\[v=w+u_1+u_2+u_3,\]
where $w\in W$, and $u_1, ~u_2$ and $u_3$ are decomposed according to (\ref{decomp}).
The latter, combined with \eqref{a} and \eqref{b} shows that
\beqnn
u_1+u_2\in\TTT{J_1}{\bot}&~~\mbox{and}~~&w+u_3\in\TTJ{J_1},\\
u_1+u_3\in\TTT{J_2}{\bot}&~~\mbox{and}~~&w+u_2\in\TTJ{J_2}.
\eeqnn
The inclusions given above, combined with \eqref{hawai}, show that
\[ h_1(u_1+u_2)\leq\tau~~~~~\mbox{and}~~~~~h_2(u_1+u_3)\leq\tau.\]
By the definition of $g$ in \eqref{decomp}-(\ref{g}), the inequalities given above imply  that $g(u)\leq\tau$.
Combining this with the definition of $W^\tau$ in (\ref{Wtau}) entails that $v\in W^\tau$.
Consequently,
\[\Big(\TTT{J_1}{\tau} \cap \TTT{J_2}{\tau}\Big)\subset W^\tau~~~~\mbox{and}~~~~
\Big(\TTT{J_1}{\tau} \cap \TTT{J_2}{\tau}\cap \LS{f_d}{\theta}\Big)~\subset~ \Big(W^\tau\cap \LS{f_d}{\theta}\Big). \]
It follows that
\[\LEBEG{\TTT{J_1}{\tau} \cap \TTT{J_2}{\tau}\cap \LS{f_d}{\theta}}{N}
            \leq \LEBEG{W^\tau \cap \LS{f_d}{\theta}}{N}.\]
Applying now the right-hand side of (\ref{encad}) in
Proposition \ref{propV} with $W^\tau$ in place
of $V^\tau$ and taking $\delta_{\!J_1\!,\!J_2}$ such that
\beq f_d(u)\leq\delta_{\!J_1\!,\!J_2} g(u),~~~\forall u\in W^\bot,\label{del12}\eeq
leads to
\[\LEBEG{W^\tau \cap \LS{f_d}{\theta}}{N}\leq Q'_{\!J_1\!,\!J_2}\tau^{N-k}(\theta+\delta_{\!J_1\!,\!J_2}\tau)^k,\]
where it is easy to see that
\begin{equation}\label{Q'}
Q'_{\!J_1\!,\!J_2}=\LEBEG{P_{W^\bot}\left(\LS{\tilde g}{\tau}\right)}{N-k} \LEBEG{W\cap
  \LS{f_d}{1}}{k}=\LEBEG{\LS{g}{1}}{N-k} \LEBEG{W\cap
  \LS{f_d}{1}}{k}.
\end{equation}
In order to obtain (\ref{QJJ}), we are going to show that
$\LS{g}{1}\subset \left(\LS{\|.\|_2}{2\delta_2}\cap W^\bot\right)$. Using Lemma \ref{lemme_h}
(ii), if $u \in W^\bot$ is decomposed according to \eqref{decomp}, we obtain
\begin{eqnarray}
\|u\|_2 = \left(\|u_1\|_2^2+\|u_2\|_2^2+\|u_3\|_2^2 \right)^{\frac{1}{2}}& \leq &  \|2 u_1+u_2+u_3\|_2 \nonumber\\
& \leq &  \|u_1+u_2\|_2 + \|u_1+u_3\|_2 \nonumber\\
&\leq & \delta_2 h_1(u_1+u_2) + \delta_2 h_2(u_1+u_3)\nonumber\\
&\leq & 2 \delta_2 g(u).\nonumber
\end{eqnarray}
So $\LS{g}{1}\subset \left(\LS{\|.\|_2}{2\delta_2}\cap W^\bot\right)$ and
$Q'_{\!J_1\!,\!J_2}\leq Q_{\!J_1\!,\!J_2}$, for $Q_{\!J_1\!,\!J_2}$ as
given in the proposition.

At last, we need to build a uniform bound on
$\delta_{\!J_1\!,\!J_2}$ giving rise to \eqref{del12}.
Using Lemma \ref{lemme_h} (ii), if $u \in W^\bot$ is decomposed according to \eqref{decomp},
we obtain
\begin{eqnarray}
f_d(u) = f_d(u_1+u_2+u_3) & \leq &  f_d(2 u_1+u_2+u_3) + f_d(u_1) \nonumber\\
& \leq &  f_d(u_1+u_2) + f_d(u_1+u_3) +  f_d(u_1) \nonumber\\
&\leq & \odelta h_1(u_1+u_2) + \odelta h_2(u_1+u_3) +\delta_1
\|u_1\|_2 .\label{ourbn}
\end{eqnarray}
Using \eqref{major2}, $\|u_1\|_2$ satisfies the  following two inequalities
\beqnn\|u_1\|_2 &\leq &\|u_1+u_2\|_2 \leq \delta_2 h_1(u_1+u_2),
\\
\|u_1\|_2 &\leq &\|u_1+u_3\|_2 \leq \delta_2 h_2(u_1+u_3).
\eeqnn
Adding these inequalities, we obtain
\[\delta_1 \|u_1\|_2 \leq \frac{\odelta}{2} ~ \big(h_1(u_1+u_2)+h_2(u_1+u_3)\big).
\]
Using \eqref{ourbn}, we finally conclude that, for $u \in W^\bot$
\begin{eqnarray*}
f_d(u) & \leq & \frac{3\odelta}{2} ~  \big( h_1(u_1+u_2)+h_2(u_1+u_3)\big)\\
 & \leq & 3 \odelta ~ g(u).
\end{eqnarray*}
The proof is complete.
\end{proof}

\section{Sets of data yielding $K$-sparse solutions or sparser}\label{sparser}

For any given $K\in\{0,\ldots, N\}$ and $\tau>0$, we introduce the subset $\ASTK{K}{\tau}$ as it follows:
\beqn\ASTK{K}{\tau}  \defeq \big\{d\in \RR^N~:~ \val(\P_d)\leq K \big\} \label{ASTK}.
\eeqn
All data belonging to $\ASTK{K}{\tau}$ generate a solution of $(\P_d)$---see (\ref{Pd})---which involves at most
$K$ non-zero components.

Let us define
\begin{equation}
G_K \defeq \big\{J\subset I: \dim(\TTJ{J})\leq K\big\},\label{GK}
\end{equation}
and remind that $\TTJ{J}=\Span\left((\psi_j)_{j\in J}\right)$ according to  (\ref{span}).

The next proposition states a strong and slightly surprising result.
\begin{proposition}\label{astk=sstk}
For any $K\in\{0,\ldots, N\}$, any norm $\|.\|$ and any $\tau>0$, we have
\[\ASTK{K}{\tau} = \bigcup_{J\in G_K} \TTJ{J} + \LS{\|.\|}{\tau}.
\]
\end{proposition}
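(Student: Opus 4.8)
The statement to prove is
\[
\ASTK{K}{\tau} = \bigcup_{J\in G_K} \big(\TTJ{J} + \LS{\|.\|}{\tau}\big),
\]
where $\ASTK{K}{\tau}=\{d\in\RR^N : \val(\P_d)\leq K\}$. The plan is to prove the two inclusions separately, translating the optimization statement $\val(\P_d)\le K$ into a purely geometric statement about which translates of the ball $\LS{\|.\|}{\tau}$ contain $d$. The key dictionary between the two sides is: $\val(\P_d)\le K$ means there exist coefficients $(\lambda_i)_{i\in I}$ with at most $K$ nonzero entries such that $\|\sum_i\lambda_i\psi_i - d\|\le\tau$.

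\emph{Inclusion $\subseteq$.} Let $d\in\ASTK{K}{\tau}$, and pick an optimal $(\lambda_i)_{i\in I}$ with $J \defeq \{i\in I:\lambda_i\ne 0\}$ of cardinality $\val(\P_d)\le K$. Set $v \defeq \sum_{i\in I}\lambda_i\psi_i = \sum_{i\in J}\lambda_i\psi_i \in \TTJ{J}$. The constraint says $\|v-d\|\le\tau$, i.e. $d-v\in\LS{\|.\|}{\tau}$, so $d = v + (d-v) \in \TTJ{J} + \LS{\|.\|}{\tau}$. Finally, since $\dim(\TTJ{J})\le \#J \le K$, we have $J\in G_K$ (here I use that $\TTJ{J}$ is spanned by $\#J$ vectors, so its dimension is at most $\#J$; this is where Remark~\ref{REM1} in spirit applies, though all I need is the trivial bound). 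Hence $d$ lies in the right-hand union.

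\emph{Inclusion $\supseteq$.} Let $d\in\TTJ{J}+\LS{\|.\|}{\tau}$ for some $J\in G_K$. Write $d = v + r$ with $v\in\TTJ{J}$ and $\|r\|\le\tau$. Since $v\in\Span((\psi_j)_{j\in J})$, there exist coefficients $(\mu_j)_{j\in J}$ with $v = \sum_{j\in J}\mu_j\psi_j$; extend these to $(\lambda_i)_{i\in I}$ by setting $\lambda_i=0$ for $i\notin J$. Then $\sum_{i\in I}\lambda_i\psi_i = v$ and $\|\sum_i\lambda_i\psi_i - d\| = \|v-d\| = \|r\| \le \tau$, so $(\lambda_i)_{i\in I}$ is feasible for $(\P_d)$ with $\ell_0((\lambda_i))\le\#J$. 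But $J\in G_K$ only guarantees $\dim(\TTJ{J})\le K$, not $\#J\le K$, so I cannot directly conclude $\val(\P_d)\le K$ from this naive choice of coefficients. The fix: choose, among all $j\in J$, a subset $J'\subseteq J$ of size exactly $\dim(\TTJ{J})\le K$ such that $(\psi_j)_{j\in J'}$ is a basis of $\TTJ{J}$. Then $v\in\TTJ{J}=\TTJ{J'}$ can be rewritten as $v=\sum_{j\in J'}\mu'_j\psi_j$, and the corresponding extension has at most $\#J'=\dim(\TTJ{J})\le K$ nonzero coefficients while still satisfying the constraint. Hence $\val(\P_d)\le K$, i.e. $d\in\ASTK{K}{\tau}$.

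\emph{Main obstacle.} The only subtlety — and the reason the result is called ``slightly surprising'' — is exactly the point in the $\supseteq$ direction: membership in $G_K$ is phrased via $\dim(\TTJ{J})\le K$, not via $\#J\le K$, so a set $J$ with many indices but low-dimensional span still contributes, and one must pass to a basis subset to realize the sparsity bound. This is a one-line linear-algebra observation (extract a basis from a spanning set), so there is no real computational difficulty; the content is purely in recognizing that $\val(\P_d)$ is controlled by $\dim(\TTJ{J})$ rather than by $\#J$, which is consistent with Remark~\ref{REM1}.
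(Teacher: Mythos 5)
Your proof is correct and follows essentially the same route as the paper's: the forward inclusion reads off the support $J$ of an optimal solution and uses $\dim(\TTJ{J})\leq\#J\leq K$, while the reverse inclusion re-expands $v\in\TTJ{J}$ over a basis extracted from $(\psi_j)_{j\in J}$ to get at most $\dim(\TTJ{J})\leq K$ nonzero coefficients. The ``fix'' you flag in the $\supseteq$ direction is exactly the step the paper performs (it writes $v$ with at most $\dim(\TTJ{J})$ nonzero components), so nothing is missing.
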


Some sets $\ASTK{K}{\tau}$, as defined in \eqref{ASTK} and explained in the last proposition, are illustrated on
Figure \ref{dessin1}.

\begin{proof}
The case $K=0$ is trivial ($G_0=\{\emptyset\}$) and we assume in the following that $K\geq 1$.

Let $d\in \ASTK{K}{\tau}$. This means there is $(\lambda_i)_{i\in I}$---a
solution of $(\P_d)$---that satisfies $\ell_0((\lambda_i)_{i\in I})\leq K$.
Hence
\beqnn d=\sum_{i\in J} \lambda_i \psi_i + w~&&\mbox{with}~~~w\in\LS{\|.\|}{\tau}\\
&&\mbox{and}~~~J=\{i\in I : \lambda_i\neq 0\}~~~\mbox{with}~~~ \# J\leq K.
\eeqnn
Consequently $\dim(\TTJ{J})\leq\# J\leq K$, which implies that
$d\in \cup_{J\in G_K} \TTJ{J} + \LS{\|.\|}{\tau}$.

Conversely, let $d\in\cup_{J\in G_K} \TTJ{J} + \LS{\|.\|}{\tau}$, then
$d=v+w$ where $v \in \cup_{J\in G_K} \TTJ{J} $ and $w\in\LS{\|.\|}{\tau}$.
Then:
\bit
\item $\exists~J\subset I$ such that $v\in \TTJ{J}$ and the latter satisfies $\dim(\TTJ{J})\leq K$;
\item there are real numbers $(\lambda_i)_{i\in J}$ involving at most
  $\dim(\TTJ{J})$ non-zero components (hence $\ell_0((\lambda_i)_{i\in
    J})\leq \dim(\TTJ{J})\leq K$) such that $v=\sum_{i\in J} \lambda_i\psi_i$.
\item  $w\in\LS{\|.\|}{\tau}$ means that $\|w\|\leq\tau$.
\eit
It follows that ~$d=\sum_{i\in J}\lambda_i\psi_i+w ~\in\ASTK{K}{\tau}$.
\end{proof}

Given $J\subset I$, remind that $\TTJ{J}= \Span\left((\psi_j)_{j\in J}\right)$---see (\ref{span}).
Since $(\psi_i)_{i\in I}$ is a general family of vectors, there may be numerous subsets
$J_n$, $n=1,2,\ldots$, such that $\TTJ{J_n}=\TTJ{J_m}$ and $J_n\neq J_m$.
A non-redundant listing of all possible subspaces $\TTJ{J}$ when $J$ runs over all subsets of $I$ can be obtained
with the help of the notations below.

For any $K=0,\ldots,N$, define $\J(K)$ by the following three properties:
\beq \left\{\barr{lll}
(a)&&\J(K) \subset \big\{J\subset I : \dim(\TTJ{J})=K \big\};\\~\\
(b)&&  J_1,J_2\in \J(K)~~\mbox{and} ~~J_1\neq J_2 ~~\Longrightarrow~~ \TTJ{J_1}\neq \TTJ{J_2};\\~\\
(c)&&\J(K) ~\mbox{is maximal:}\\
&&\mbox{if~~} J_1\subset I~~\mbox{yields}~~ \dim(\TTJ{J_1})=K~~\mbox{then}~~\exists J\in\J(K)
~~\mbox{such that}~~\TTJ{J}=\TTJ{J_1}.
\earr\right.\label{Jk}\eeq
Notice that in particular, $\J(0) = \{\emptyset\}$ and $\#\J(N) = 1$.
One can observe that $G_K$, as defined in (\ref{GK}), satisfies
\[ G_K\supset\bigcup_{k=0}^K\J(k)
\]
and
\begin{equation}\label{GK=Jk}
\{ \TTJ{J} : J\in G_K\} = \{ \TTJ{J} : J\in \J(k) \mbox{ for } k\in\{0,\ldots,K\}\}.
\end{equation}

Using these notations, we can give a more convenient formulation of Proposition \ref{astk=sstk}.

\begin{theorem}\label{proj=all}
For any $K\in\{0,\ldots, N\}$, any norm $\|.\|$ and any $\tau>0$, we have
\[\ASTK{K}{\tau} = \bigcup_{J\in \J(K)} \TTT{J}{\tau},
\]
where we remind that for any $J\subset I$ and $\tau>0$,
$\TTT{J}{\tau}$ is defined by (\ref{TTJT}), and  $\J(K)$ is defined by (\ref{Jk}).

As a consequence, $\ASTK{K}{\tau}$ is closed and measurable.
\end{theorem}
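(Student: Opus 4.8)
The plan is to deduce Theorem \ref{proj=all} directly from Proposition \ref{astk=sstk} together with the combinatorial relation \eqref{GK=Jk} and Lemma \ref{+ball}. Proposition \ref{astk=sstk} gives
\[\ASTK{K}{\tau} = \bigcup_{J\in G_K} \big(\TTJ{J} + \LS{\|.\|}{\tau}\big),\]
so the whole point is that the union over the (typically huge and redundant) index set $G_K$ can be replaced by the union over $\J(K)$ without changing the result. First I would observe that for any $J\subset I$ the set $\TTJ{J}+\LS{\|.\|}{\tau}$ depends on $J$ only through the subspace $\TTJ{J}$; indeed, if $\TTJ{J}=\TTJ{J'}$ then trivially $\TTJ{J}+\LS{\|.\|}{\tau}=\TTJ{J'}+\LS{\|.\|}{\tau}$. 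Hence the union $\bigcup_{J\in G_K}\big(\TTJ{J}+\LS{\|.\|}{\tau}\big)$ equals $\bigcup_{V\in \mathcal V}\big(V+\LS{\|.\|}{\tau}\big)$, where $\mathcal V=\{\TTJ{J}:J\in G_K\}$ is the \emph{set} of distinct subspaces arising this way.

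Next I would invoke \eqref{GK=Jk}, which states precisely that $\{\TTJ{J}:J\in G_K\}=\{\TTJ{J}:J\in\J(k)\text{ for some }k\in\{0,\ldots,K\}\}$. Therefore
\[\ASTK{K}{\tau}=\bigcup_{k=0}^{K}\;\bigcup_{J\in\J(k)}\big(\TTJ{J}+\LS{\|.\|}{\tau}\big).\]
To collapse the outer union over $k$ into a single union over $\J(K)$, I would use the monotonicity: if $J\in\J(k)$ with $k\le K$, then $\TTJ{J}$ is a $k$-dimensional subspace spanned by dictionary elements, and by \eqref{spanRN} it can be enlarged to a $K$-dimensional subspace of the same form, i.e. there is $J'\in\J(K)$ with $\TTJ{J}\subset\TTJ{J'}$; consequently $\TTJ{J}+\LS{\|.\|}{\tau}\subset\TTJ{J'}+\LS{\|.\|}{\tau}$. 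This shows the union over $\bigcup_{k\le K}\J(k)$ is contained in the union over $\J(K)$, and the reverse inclusion is immediate since $\J(K)\subset G_K$. Combining with Lemma \ref{+ball}, which identifies $\TTJ{J}+\LS{\|.\|}{\tau}$ with $\TTT{J}{\tau}$, yields
\[\ASTK{K}{\tau}=\bigcup_{J\in\J(K)}\TTT{J}{\tau}.\]

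Finally, for the closedness and measurability assertion, I would note that $\J(K)$ is a finite set (it is a subset of the finite collection of subsets of the finite index set $I$), each $\TTT{J}{\tau}$ is closed and measurable by Corollary \ref{rmkTJ}(i), and a finite union of closed measurable sets is closed and measurable. I do not expect a genuine obstacle here: the argument is purely organizational, repackaging Proposition \ref{astk=sstk} via the definitions in \eqref{Jk}. The only point requiring a little care is the enlargement step, justifying that every $J\in\J(k)$ with $k<K$ has its span contained in the span of some $J'\in\J(K)$; this relies on the spanning hypothesis \eqref{spanRN} guaranteeing that any subspace spanned by dictionary vectors extends to one of any larger dimension $\le N$, together with the maximality property (c) in \eqref{Jk}.
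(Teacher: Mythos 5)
Your proposal is correct and follows essentially the same route as the paper's own proof: reduce to Proposition~\ref{astk=sstk}, pass from $G_K$ to $\bigcup_{k\le K}\J(k)$ via \eqref{GK=Jk}, absorb lower-dimensional pieces into $\J(K)$ by enlarging spans (using \eqref{spanRN} and maximality (c)), identify $\TTJ{J}+\LS{\|.\|}{\tau}$ with $\TTT{J}{\tau}$ via Lemma~\ref{+ball}, and conclude measurability from finiteness of $\J(K)$. The only difference is cosmetic (the order in which Lemma~\ref{+ball} is invoked), and your explicit justification of the enlargement step is actually slightly more careful than the paper's.
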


\begin{proof} The case $J=\emptyset$ (and $K=0$) is trivial because of
  the convention $\Span(\emptyset) = \{0\}$ and $\J(0)=\{\emptyset\}$.

Let us first prove that $\ASTK{K}{\tau}=\bigcup_{J\in G_K}
\TTT{J}{\tau}$. Using Proposition \ref{astk=sstk},
\[\ASTK{K}{\tau}= \left(\bigcup_{J\in G_K}\TTJ{J}\right) + \LS{\|.\|}{\tau}=
\bigcup_{J\in G_K}\big(\TTJ{J} + \LS{\|.\|}{\tau}\big).
\]
The  last equality above is a trivial observation. Using Lemma \ref{+ball},
this summarizes us
\[\ASTK{K}{\tau}=\bigcup_{J\in G_K} \TTT{J}{\tau}.
\]
Using \eqref{GK=Jk}, we deduce that
\[\{ \TTT{J}{\tau} : J\in G_K\} = \big\{ \TTT{J}{\tau} : J\in \J(k) \mbox{ for } k\in\{0,\ldots,K\}\big\},
\]
and therefore,
\[ \ASTK{K}{\tau}=\bigcup_{k=0}^K\bigcup_{J\in \J(k)}\TTT{J}{\tau}.\]
Moreover, for any $k<K$ and $J\in \J(k)$, we can find
$J_1\in \J(K)$ such that $\TTJ{J}\subset\TTJ{J_1}$.
Using Lemma \ref{+ball}, we find that $\TTT{J}{\tau}\subset \TTT{J_1}{\tau}$.
Consequently,
\[ \ASTK{K}{\tau}= \bigcup_{J\in \J(K)}\TTT{J}{\tau}.
\]
This completes the proof of the first statement.

By Proposition \ref{propV}, $\ASTK{K}{\tau}$ is a finite union of
closed measurable sets, hence it is closed and measurable as well.
\end{proof}

For any $K=0,\ldots,N$, define the constants $\hat\delta_K$ and $\CK{K}$ as it follows:
\beqn \hat\delta_K&\defeq&\max_{J\in\J(K)}\delta_J,\label{deltaK}\\
\CK{K}  &\defeq& \sum_{J\in {\mathcal J}(K)} C_J,\label{CK}
\eeqn
where $\delta_J\in[0,\odelta]$ and $C_J$ are the constants exhibited in Corollary \ref{rmkTJ}, assertion (ii).
Clearly,
\begin{equation} 0\leq\hat\delta_K\leq\odelta.\label{Q1} \end{equation}
In particular,
\begin{equation}  \CK{0} = \LEBEG{\LS{\|.\|}{1}}{N}\mbox{ and }\CK{N} = \LEBEG{\LS{f_d}{1}}{N}.\label{CNC0}
\end{equation}

With  $\J(K)$, let us associate the family of subsets :
\beq \H(K,k)\defeq\Big\{(J_1,J_2)\in\J(K)^2~~\mbox{such that}~~
\dim(\TTJ{J_1} \cap \TTJ{J_2})=k\Big\},\label{Hk}\eeq
where $K=1,2,\ldots,N$ and $k=0,1,\ldots,K-1$.

Notice that $\H(K,k)$ may be empty for some $k$.
Consider  $(J_1,J_2)\in\J(K)^2$ such that
\begin{eqnarray*}
\TTJ{J_1} + \TTJ{J_2} = (\TTJ{J_1}\cap \TTJ{J_2}) \oplus
(\TTJ{J_1}\cap \TTJ{J_2}^\bot)\oplus (\TTJ{J_2}\cap \TTJ{J_1}^\bot)
&\subset & \RR^N \\
\dim(\TTJ{J_1} + \TTJ{J_2}) =~~~~~~k ~~~~~~ + ~~~ (K-k)  ~~~  + ~~~ (K-k) ~~~& \leq & N
\end{eqnarray*}
and $k\geq 2K-N$. We see that $$\H(K,k)\neq \emptyset ~~~~\Rightarrow~~~~ k \geq 2K-N.$$
Conversely,
\beq k < k_K\defeq\max\{0,~2K-N\} ~~~~\Rightarrow ~~~~\H(K,k)=\emptyset.
\label{ko}\eeq
Notice that $\H(N,k) = \emptyset$, for all $k=0,\ldots, N-1$ and that
for any $K=1,\ldots,N-1$, we have $0\leq k_K \leq K-1$.

For $K\in\{1,\ldots,N-1\}$ and $k\in\{k_K,\ldots,K-1\}$ let us define
\beqn
\hat\delta'_{K,k}&\defeq& \disp{\max\left\{0, \max_{(J_1, J_2)\in\H(K,k)}\delta_{J_1,J_2}\right\}},\label{dpK}\\
\QK{k}&\defeq& \sum_{(J_1, J_2)\in\H(K,k)}
Q_{\!J_1\!,\!J_2}\label{QK}
\eeqn
where $Q_{\!J_1\!,\!J_2}$ and $\delta_{J_1, J_2}\in[0,3\odelta]$ are as in Proposition \ref{basic}.
It is clear  that  if  $\H(K,k)=\emptyset$ then we  find
$\QK{k}=0$ and $\hat\delta'_{K,k}=0$.
It follows  that for any  $K=1, \ldots, N-1$ and any $k = k_K,\ldots,K-1$
\beq 0\leq\hat\delta'_{K,k}\leq 3\odelta.\label{Q2}\eeq
Last, define
\beq\Delta_K\defeq\left\{\begin{array}{ll}
\hat\delta_0 & \mbox{, if } K=0 \\
\disp{\max\left\{ \Delta_{K-1}, \hat\delta_K,\max_{k_K\leq k\leq K-1}\hat\delta_{K,k}'\right\}} & \mbox{, if } 0<K<N \\
\max\big\{\Delta_{N-1}, \hat\delta_N\big\} & \mbox{, if } K=N \\
\end{array}\right.\label{delmax}\eeq
Using \eqref{Q1} and \eqref{Q2},
\beq 0\leq \Delta_K\leq 3\odelta.\label{Q3}\eeq

All these constants, introduced between \eqref{deltaK} and \eqref{delmax}, depend only on the family $(\psi_i)_{i\in I}$, the
norms $\|.\|$ and $f_d$, $K$ and $k$. Their upper bounds using $\odelta$
only depend on $\|.\|$ and $f_d$. They are involved in the theorem
below which provides a critical result in this work.

\begin{theorem}\label{propSSTK}
Let $K\in\{0,\ldots, N\}$, the norms $\|.\|$ and  $f_d$,
and $(\psi_i)_{i\in I}$, be any.
Let $\tau>0$ and $\theta \geq \tau \Delta_K$ where
$\Delta_K$ is defined in (\ref{delmax}). The Lebesgue measure in $\RR^N$ of the set $\ASTK{K}{\tau}$
defined by (\ref{ASTK}) satisfies
\beq\CK{K} \tau^{N-K} (\theta-\hat\delta_K \tau)^K - \theta^N~\eps_0(K,\tau,\theta) ~\leq~
\LEBEG{\ASTK{K}{\tau}\cap \LS{f_d}{\theta}}{N}
~\leq ~\CK{K}\tau^{N-K} (\theta+\hat\delta_K \tau)^K ,
\label{R1}\eeq
where
\beq \eps_0(K,\tau,\theta) = \left\{\begin{array}{ll}
~~~~0 & \mbox{\rm if } K = 0 \mbox{~~\rm  or } K=N\\
\disp{\sum_{k=k_K}^{K-1}\QK{k}\left(\frac{\tau}{\theta}\right)^{N-k}\Big(1+\hat\delta_{K,k}' \frac{\tau}{\theta}\Big)^k}
& \mbox{\rm  if } 0< K < N\\
\end{array}\right.
\label{eps_0}\eeq
for $\CK{K}$, $k_K$, $\QK{k}$, $\hat\delta_k$ and  $\hat\delta_{K,k}'$
defined by \eqref{CK}, \eqref{ko}, \eqref{QK}, \eqref{deltaK} and
 \eqref{dpK} respectively.
 Moreover, \eqref{Q1},  \eqref{Q2} and \eqref{Q3} provide bounds on
$\hat\delta_K$, $\hat\delta_{K,k}'$ and  $\Delta_K$,
respectively, which depend only on  $\|.\|$ and $f_d$, via
$\odelta$ (see Lemma \ref{lemme_h}  (iii)).
\end{theorem}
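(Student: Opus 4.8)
The plan is to establish the two inequalities in \eqref{R1} separately, using the exact decomposition of $\ASTK{K}{\tau}$ furnished by Theorem \ref{proj=all}, namely $\ASTK{K}{\tau}=\bigcup_{J\in\J(K)}\TTT{J}{\tau}$, together with the measure estimates of Corollary \ref{rmkTJ} and the intersection bound of Proposition \ref{basic}. Throughout we intersect with $\LS{f_d}{\theta}$ and exploit that $\theta\geq\tau\Delta_K\geq\tau\hat\delta_K$ and $\theta\geq\tau\hat\delta'_{K,k}$ (and $\theta\geq 3\odelta\tau$ suffices), so that all the relevant hypotheses ``$\theta\geq\delta\tau$'' in the cited results are met.

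For the \emph{upper bound}, I would simply use subadditivity of Lebesgue measure over the finite union:
\[
\LEBEG{\ASTK{K}{\tau}\cap\LS{f_d}{\theta}}{N}\leq\sum_{J\in\J(K)}\LEBEG{\TTT{J}{\tau}\cap\LS{f_d}{\theta}}{N}\leq\sum_{J\in\J(K)}C_J\,\tau^{N-K}(\theta+\delta_J\tau)^K,
\]
where the second inequality is the right-hand side of \eqref{used} in Corollary \ref{rmkTJ}. Since $\delta_J\leq\hat\delta_K$ by \eqref{deltaK}, each term is bounded by $C_J\tau^{N-K}(\theta+\hat\delta_K\tau)^K$, and summing over $J\in\J(K)$ and using $\CK{K}=\sum_{J\in\J(K)}C_J$ from \eqref{CK} gives exactly the claimed upper bound. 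The cases $K=0$ and $K=N$ are immediate from \eqref{CNC0}.

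For the \emph{lower bound}, I would use inclusion--exclusion truncated at first order: for a finite family of measurable sets $A_J$,
\[
\LEBEG{\bigcup_{J}A_J}{N}\geq\sum_{J}\LEBEG{A_J}{N}-\sum_{\{J_1,J_2\}}\LEBEG{A_{J_1}\cap A_{J_2}}{N}.
\]
Apply this with $A_J=\TTT{J}{\tau}\cap\LS{f_d}{\theta}$. By the left-hand side of \eqref{used}, $\sum_J\LEBEG{A_J}{N}\geq\sum_J C_J\tau^{N-K}(\theta-\delta_J\tau)^K\geq\CK{K}\tau^{N-K}(\theta-\hat\delta_K\tau)^K$ (monotonicity in the exponent is fine since $\theta-\hat\delta_K\tau\geq\theta-\delta_J\tau\geq 0$... wait, careful: $(\theta-\delta_J\tau)^K\geq(\theta-\hat\delta_K\tau)^K$ since $\delta_J\leq\hat\delta_K$, so indeed $\sum_J C_J\tau^{N-K}(\theta-\delta_J\tau)^K\geq\CK{K}\tau^{N-K}(\theta-\hat\delta_K\tau)^K$). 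For the subtracted pair terms, the pairs with $\TTJ{J_1}=\TTJ{J_2}$ do not occur for distinct $J_1,J_2\in\J(K)$ by property (b) of \eqref{Jk}, so every pair falls into some $\H(K,k)$ with $0\leq k\leq K-1$; grouping by $k$ and applying Proposition \ref{basic} gives
\[
\sum_{\{J_1,J_2\}}\LEBEG{A_{J_1}\cap A_{J_2}}{N}\leq\sum_{k=k_K}^{K-1}\sum_{(J_1,J_2)\in\H(K,k)}Q_{\!J_1\!,\!J_2}\,\tau^{N-k}(\theta+\delta_{J_1,J_2}\tau)^k\leq\sum_{k=k_K}^{K-1}\QK{k}\,\tau^{N-k}(\theta+\hat\delta'_{K,k}\tau)^k,
\]
using $\delta_{J_1,J_2}\leq\hat\delta'_{K,k}$, $\QK{k}=\sum Q_{\!J_1\!,\!J_2}$, and \eqref{ko} to drop the empty families. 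Factoring out $\theta^N$ from this last expression yields precisely $\theta^N\eps_0(K,\tau,\theta)$ with $\eps_0$ as in \eqref{eps_0}, since $\tau^{N-k}\theta^k(1+\hat\delta'_{K,k}\tau/\theta)^k=\theta^N(\tau/\theta)^{N-k}(1+\hat\delta'_{K,k}\tau/\theta)^k$.

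There is no single hard obstacle here; the proposition is essentially a bookkeeping assembly of the earlier results. The one point requiring mild care is checking that the hypotheses $\theta\geq\delta_J\tau$ (needed for \eqref{used}) and $\theta\geq\delta_{J_1,J_2}\tau$ (needed for Proposition \ref{basic}) are all implied by the single hypothesis $\theta\geq\tau\Delta_K$; this is exactly why $\Delta_K$ was defined in \eqref{delmax} as the maximum of $\hat\delta_K$ and the $\hat\delta'_{K,k}$ (and the recursively smaller $\Delta_{K-1}$), and why \eqref{Q3} records $\Delta_K\leq 3\odelta$. I would also note explicitly that property (b) of $\J(K)$ is what makes Proposition \ref{basic} applicable to every pair (its hypothesis $\TTJ{J_1}\neq\TTJ{J_2}$ holds automatically), so no pair term is left unestimated. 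Finally, for $K=0$ and $K=N$ one checks directly that $\eps_0=0$ and that both bounds collapse to the single value given by \eqref{CNC0}, completing the proof.
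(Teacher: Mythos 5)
Your proposal is correct and follows essentially the same route as the paper: Theorem \ref{proj=all} to reduce to the union over $\J(K)$, subadditivity plus Corollary \ref{rmkTJ} for the upper bound, and a first-order (Bonferroni) truncation of inclusion--exclusion combined with Proposition \ref{basic} for the lower bound — the paper merely derives the Bonferroni inequality explicitly by disjointifying the union $\bigcup_i B_{J_i}$ rather than quoting it. The bookkeeping with $\hat\delta_K$, $\hat\delta'_{K,k}$, $\QK{k}$ and the hypothesis $\theta\geq\tau\Delta_K$ matches the paper's argument.
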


\begin{remark}\label{rmkASTK} We posit the assumptions of Theorem
  \ref{propSSTK}. Then asymptotically
\[\LEBEG{\ASTK{K}{\tau} \cap \LS{f_d}{\theta}}{N}
= \CK{K} ~\theta^{N}\left(\frac{\tau}{\theta}\right)^{N-K} +\theta^N ~o\left(
  \left(\frac{\tau}{\theta}\right)^{N-K}
\right)~~\mbox{as}~~\frac{\tau}{\theta}\to 0.\]
\end{remark}

\begin{proof}
Using Theorem \ref{proj=all}, it is straightforward that
\beq
\ASTK{K}{\tau}\cap \LS{f_d}{\theta}
~=~ \bigcup_{J\in \J(K)}\Big(\TTT{J}{\tau}\cap \LS{f_d}{\theta}\Big)
\label{EQUA}
\eeq
and that
\beq
\LEBEG{\ASTK{K}{\tau}\cap \LS{f_d}{\theta}}{N} =
\LEBEG{\bigcup_{J\in\J(K)}\Big(\TTT{J}{\tau}\cap \LS{f_d}{\theta}\Big)}{N}.
\label{evi}
\eeq
When $K=0$ or $K=N$, we have $\#\J(K)=1$. Then, \eqref{R1} is a
straightforward consequence of \eqref{evi} and Proposition \ref{propV}
(the latter can be applied thanks to the assumption $\theta > \tau
\Delta_K$ and \eqref{delmax}).

The rest of the proof is to find relevant bounds for the right-hand
side of (\ref{evi}) under the assumption that $0<K<N$.

\paragraph{\it Upper bound.}
By the definition of a measure, and then using
Corollary \ref{rmkTJ}, it is found that
\beqn \LEBEG{\ASTK{K}{\tau}\cap \LS{f_d}{\theta}}{N}
&\leq&
\sum_{J\in\J(K)}\LEBEG{\TTT{J}{\tau}\cap \LS{f_d}{\theta}}{N} \label{missing}\\
&\leq &
\tau^{N-K}\sum_{J\in\J(K)} C_J(\theta+\delta_J\tau)^K  \nonumber\\
&\leq &
\tau^{N-K}(\theta+\hat\delta_K\tau)^K\sum_{J\in\J(K)} C_J \nonumber\\
&= & \CK{K}~\tau^{N-K}(\theta+\hat\delta_K\tau)^K ,\nonumber
\eeqn
where the constants $\hat\delta_K$ and $\CK{K}$  are defined in (\ref{deltaK}) and (\ref{CK}),
respectively.

\paragraph{\it Lower bound.}
First we represent
the right-hand side of (\ref{EQUA}) as a union of disjoint subsets.
Since $\J(K)$ is finite, let us enumerate its elements as
\[\J(K)=\{J_1,\ldots,J_M\}~~\mbox{where}~~M=\#\big(\J(K)\big).
\]
To simplify the expressions that follow, for any $J$ we denote
\beq B_J=\TTT{J}{\tau}\cap \LS{f_d}{\theta}\label{BJ}.\eeq
Then
\[\bigcup_{J\in\J(K)}\Big(\TTT{J}{\tau}\cap \LS{f_d}{\theta}\Big)=
\bigcup_{i=1}^M B_{J_i}.\]
Consider the following decomposition:
\beqnn \bigcup_{i=1}^M B_{J_i}&=&
\Big(B_{J_1}\Big)\cup \Big(B_{J_2}\setminus(B_{J_1}\cap B_{J_2})\Big)\cup\ldots\cup
\Big(B_{J_M}\setminus\left(\cup_{j=1}^{M-1} (B_{J_j} \cap B_{J_M})\right)\Big)\\
&=&\Big(B_{J_1}\Big)\cup~
\bigcup_{i=2}^M\left(B_{J_i}\setminus\Big(\bigcup_{j=1}^{i-1} \big(B_{J_j}
  \cap B_{J_i} \big) \Big)\right).
\eeqnn
Since the last row is a union of disjoint sets, we have
\[\LEBEG{\bigcup_{i=1}^M B_{J_i}}{N} = \LEBEG{B_{J_1}}{N} + \sum_{i=2}^M \LEBEG{(B_{J_i}\setminus\left(\cup_{j=1}^{i-1} (B_{J_j}
  \cap B_{J_i} ) \right)}{N}.
\]
Noticing that $\left(\bigcup_{j=1}^{i-1} (B_{J_j} \cap B_{J_i})\right)\subset B_{J_i}$ entails that
\[\LEBEG{B_{J_i}\setminus\big(\cup_{j=1}^{i-1} (B_{J_j}\cap B_{J_i})\big)}{N}
  =\LEBEG{B_{J_i}}{N}-\LEBEG{\cup_{j=1}^{i-1} (B_{J_j}\cap B_{J_i})}{N},~~~\forall i=2,\ldots,M.\]
Hence
\beq\LEBEG{\bigcup_{i=1}^M
  B_{J_i}}{N}=\sum_{i=1}^M\LEBEG{B_{J_i}}{N}-\sum_{i=2}^M\LEBEG{\bigcup_{j=1}^{i-1}(B_{J_j}\cap B_{J_i})}{N}.
\label{mlk}\eeq

Using  successively \eqref{BJ}, assertion (ii) of Corollary
\ref{rmkTJ}, \eqref{deltaK}, \eqref{CK} and $\theta \geq \tau\Delta_K$
shows that
\beqn
\sum_{i=1}^M\LEBEG{B_{J_i}}{N} &=& \sum_{J\in \J(K)}\LEBEG{\TTT{J}{\tau}\cap \LS{f_d}{\theta}}{N}\nonumber \\
&\geq& \sum_{J\in \J(K)} C_J\tau^{N-K}(\theta-\delta_J\tau)^K \nonumber\\
&\geq& \CK{K}\tau^{N-K} (\theta - \hat\delta_K \tau)^K,\label{pwjrv}
\eeqn
where the constants $\hat\delta_K$ and $\CK{K}$ are given in (\ref{deltaK}) and (\ref{CK}), respectively.

Using the original notation (\ref{BJ}), each term, for $i=2,...,M$, in the last sum in \eqref{mlk} satisfies
\beq\LEBEG{\bigcup_{j=1}^{i-1} (B_{J_j}\cap B_{J_i})}{N}
\leq \sum_{j=1}^{i-1}\LEBEG{B_{J_j}\cap B_{J_i}}{N}
=\sum_{j=1}^{i-1}
\LEBEG{\LS{f_d}{\theta}\cap \TTT{J_j}{\tau} \cap \TTT{J_i}{\tau}}{N}. \label{mnb}
\eeq
Let us remind that $\dim(\TTJ{J_i}) =K$ for every $i=1,\ldots,M$ and that by the definition of
$\J(K)$---see (\ref{Jk})---we have $\TTJ{J_j}\neq \TTJ{J_i}$ if $i\neq j$.
Proposition \ref{basic} can hence be applied to each term of the last sum:
\beqnn \LEBEG{\LS{f_d}{\theta}\cap \TTT{J_j}{\tau} \cap \TTT{J_i}{\tau}}{N}&\leq&
Q_{J_i,J_j}\tau^{N-k_{i,j}}(\theta +\delta_{_{J_i,J_j}} \tau)^{k_{i,j}}\\
\mbox{where}&& k_{i,j}=\dim\big(\TTJ{J_j}\cap \TTJ{J_i}\big).
\eeqnn
Then (\ref{mnb}) leads to
\[\LEBEG{\bigcup_{j=1}^{i-1} (B_{J_j}\cap B_{J_i})}{N} \leq
 \sum_{j=1}^{i-1}Q_{J_j,J_i}\tau^{N-k_{i,j}}(\theta +\delta_{_{J_j,J_i}} \tau)^{k_{i,j}}.
\]
By rearranging the last sum in (\ref{mlk}) and taking into account \eqref{ko}, we obtain
\beq\sum_{i=2}^{M} \LEBEG{\bigcup_{j=1}^{i-1} (B_{J_j}\cap   B_{J_i})}{N}
\leq  \sum_{k=k_K}^{K-1} {\QK{k}} \tau^{N-k} (\theta +\hat\delta'_{K,k} \tau)^k,
\label{fi}\eeq
where $\hat\delta'_{K,k}$ and $\QK{k}$ are given in (\ref{dpK}) and (\ref{QK}), respectively.

Combining (\ref{evi}) along with the original notations (\ref{BJ}) and then
(\ref{mlk}),  (\ref{pwjrv}) and (\ref{fi}) yields
\beqn \LEBEG{ \ASTK{K}{\tau}\cap \LS{f_d}{\theta} }{N}& = &
 \LEBEG{\bigcup_{i=1}^M B_{J_i}}{N}  \nonumber \\
&\geq& \CK{K}\tau^{N-K}(\theta-\hat\delta_K\tau)^K  - \eps_0(K,\tau,\theta)\nonumber,
\eeqn
where $\eps_0(.)$ is as in the proposition.
This finishes the proof.
\end{proof}

\begin{remark}
In the proof of this theorem we could notice (see \eqref{mlk}, \eqref{mnb} and \eqref{missing}) that

\beqn
\sum_{J\in \J(K)}\LEBEG{\TTT{J}{\tau}\cap \LS{f_d}{\theta}}{N}
&-&\sum_{k=k_K}^{K-1}\sum_{(J_1, J_2)\in\H(K,k)}\LEBEG{\LS{f_d}{\theta}\cap \TTT{J_1}{\tau} \cap \TTT{J_2}{\tau}}{N}
\nonumber\\
&\leq&
\LEBEG{\ASTK{K}{\tau}\cap \LS{f_d}{\theta}}{N} \label{whoa}\\
&\leq&\sum_{J\in\J(K)}\LEBEG{\TTT{J}{\tau}\cap \LS{f_d}{\theta}}{N}.\nonumber
\eeqn
These are the main approximations of $\LEBEG{\ASTK{K}{\tau}\cap
  \LS{f_d}{\theta}}{N}$ in the proof of the theorem. The
precision of the bounds given in the theorem could be more accurate by
improving the above inequalities. The loss of accuracy has however the
same order of magnitude as the precision in the calculus of
$\LEBEG{\TTT{J}{\tau}\cap \LS{f_d}{\theta}}{N}$.
\end{remark}

The constants $\Delta_K$, $\hat\delta_K$ and $\hat\delta_{K,k}'$ depend on
$(\psi_i)_{i\in I}$ and $K$.
Using the uniform
bound $\odelta$ exhibited in Lemma \ref{lemme_h} (ii) in place of $\hat\delta_K$ and $\hat\delta'_{K,k}$ leads
to a more general but less precise result.
\begin{corollary}\label{corUnifBounds}
Let $K\in\{0,\ldots, N\}$, the norms $\|.\|$ and  $f_d$,
and $(\psi_i)_{i\in I}$, be any.
Let $\tau>0$ and $\theta \geq 3 \tau \odelta$ where
$\odelta$ is derived in Lemma \ref{lemme_h} (ii) and depends only on $f_d$ and $\|.\|$.
The set $\ASTK{K}{\tau}$
defined by (\ref{ASTK}) satisfies
\begin{equation}\label{peubn}
\CK{K} \tau^{N-K} (\theta-\odelta \tau)^K - \theta^N~\eps^u_0(K,\tau,\theta) \leq
\LEBEG{\ASTK{K}{\tau}\cap \LS{f_d}{\theta}}{N}
\leq \CK{K}\tau^{N-K} (\theta+\odelta \tau)^K,
\end{equation}
where
\[\eps^u_0(K,\tau,\theta)=\left\{\begin{array}{ll}
0 & \mbox{, if } K=0 \mbox{ or }K=N \\
 \disp{\sum_{k=k_K}^{K-1} \QK{k}\left(\frac{\tau}{\theta}\right)^{N-k} \left(1 +
 3\odelta \,\frac{\tau}{\theta}\right)^k}& \mbox{, if } 0<K<N.\\
\end{array}\right.
\]
Moreover, for $K=1,\ldots,N-1$ and $k=k_K,\ldots,K-1$,  we have
\begin{equation}\label{QKub}
\QK{k} \leq \#\J(K) (\#\J(K) - 1) \alpha(N-k)\alpha(k)
 (2\delta_2)^{N-k} \delta_3^k
\end{equation}
where
\begin{equation}\label{JKub}
\#\J(K) \leq \COMB{K}{\# I},
\end{equation}
$\alpha(n)$ is the volume of unit ball for the euclidean norm in
$\RR^n$ (see equation \eqref{alpha} for details), $\delta_2$ is defined in Lemma \ref{lemme_h} (see equation \eqref{Y})
and $\delta_3$ is such that
\[\|w\|_2 \leq \delta_3 f_d(w)~,  ~~~~\forall w\in\RR^N.
\]
\end{corollary}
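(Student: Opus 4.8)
The plan is to deduce everything by monotone bookkeeping on Theorem~\ref{propSSTK}, using the uniform estimates \eqref{Q1}, \eqref{Q2} and \eqref{Q3}, and then to control the two remaining combinatorial/geometric quantities $\#\J(K)$ and $\QK{k}$ by an elementary counting argument and two ball-volume computations. The first thing to check is that the hypothesis $\theta\geq 3\tau\odelta$ is enough to invoke Theorem~\ref{propSSTK}: by \eqref{Q3} we have $\Delta_K\leq 3\odelta$, hence $\theta\geq\tau\Delta_K$ and \eqref{R1} applies.

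For the upper bound in \eqref{peubn}, since $\hat\delta_K\leq\odelta$ by \eqref{Q1} we get $\CK{K}\tau^{N-K}(\theta+\hat\delta_K\tau)^K\leq\CK{K}\tau^{N-K}(\theta+\odelta\tau)^K$, which is the right-hand inequality. For the lower bound, I would first observe that $\theta-\odelta\tau\geq 0$ (because $\theta\geq 3\tau\odelta$), so $0\leq\theta-\odelta\tau\leq\theta-\hat\delta_K\tau$ gives $(\theta-\hat\delta_K\tau)^K\geq(\theta-\odelta\tau)^K$; moreover $\hat\delta'_{K,k}\leq 3\odelta$ by \eqref{Q2}, so each summand of $\eps_0(K,\tau,\theta)$ in \eqref{eps_0} is dominated by the corresponding summand of $\eps^u_0(K,\tau,\theta)$, whence $\eps_0\leq\eps^u_0$. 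Feeding these two inequalities into the lower end of the chain \eqref{R1} yields the lower bound in \eqref{peubn}; the cases $K=0$ and $K=N$ are immediate, since then $\eps_0=\eps^u_0=0$ and $\hat\delta_K\leq\odelta$ already covers both sides.

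It remains to establish \eqref{JKub} and \eqref{QKub}. For \eqref{JKub}: given $J\in\J(K)$, since $\dim(\TTJ{J})=K$ one can extract, by any fixed deterministic rule, a subset $\sigma(J)\subseteq J$ with $\#\sigma(J)=K$ and $\Span\big((\psi_j)_{j\in\sigma(J)}\big)=\TTJ{J}$; property (b) of \eqref{Jk} forces distinct elements of $\J(K)$ to have distinct spans, so $\sigma$ injects $\J(K)$ into the set of $K$-element subsets of $I$, giving $\#\J(K)\leq\COMB{K}{\# I}$.

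For \eqref{QKub}: the set $\H(K,k)$ of \eqref{Hk} consists of ordered pairs of \emph{distinct} elements of $\J(K)$ (distinct because $k\leq K-1<K$ rules out $J_1=J_2$), hence $\#\H(K,k)\leq\#\J(K)(\#\J(K)-1)$. For one such pair, write $W=\TTJ{J_1}\cap\TTJ{J_2}$ with $k=\dim W$; the first factor of $Q_{J_1,J_2}$ in \eqref{QJJ} is the $(N-k)$-dimensional measure of the Euclidean ball of radius $2\delta_2$ inside the $(N-k)$-dimensional subspace $W^\bot$, namely $\alpha(N-k)(2\delta_2)^{N-k}$, while for the second factor the inequality $\|w\|_2\leq\delta_3 f_d(w)$ gives $W\cap\LS{f_d}{1}\subseteq W\cap\LS{\|.\|_2}{\delta_3}$, a Euclidean ball of radius $\delta_3$ in the $k$-dimensional subspace $W$, of $k$-dimensional measure at most $\alpha(k)\delta_3^k$. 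Multiplying these and summing over $\H(K,k)$ gives \eqref{QKub}. None of the steps is a genuine obstacle; the only points requiring a little care are the well-definedness and injectivity of the extraction map $\sigma$ behind \eqref{JKub}, together with the bookkeeping that the two ball-volume identities are applied in the correct ambient dimensions $N-k$ and $k$.
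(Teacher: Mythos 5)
Your proposal is correct and follows essentially the same route as the paper: insert the uniform bounds \eqref{Q1}, \eqref{Q2}, \eqref{Q3} into \eqref{R1}, bound $\#\H(K,k)$ by $\#\J(K)(\#\J(K)-1)$, and evaluate the two factors of $Q_{J_1,J_2}$ as Euclidean ball volumes in dimensions $N-k$ and $k$. The only difference is that you also spell out the injection argument behind \eqref{JKub}, which the paper states without proof.
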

\begin{proof} Equation \eqref{peubn} is obtained by inserting in \eqref{R1} in Theorem \ref{propSSTK} the uniform bounds on
$\hat\delta_K$, $\hat\delta_{K,k}'$ and  $\Delta_K$ given in
\eqref{Q1},  \eqref{Q2} and \eqref{Q3}, respectively.

The upper bound for $\QK{k}$ is obtained as follows. Using
\eqref{QK} and \eqref{QJJ}, we obtain
\[\QK{k} = \sum_{(J_1, J_2)\in\H(K,k)} \LEBEG{(\TTJ{J_1}\cap\TTJ{J_2}
  )^\bot \cap
  \LS{\|.\|_2}{2\delta_2}}{N-k}~\LEBEG{\TTJ{J_1}\cap\TTJ{J_2}\cap
  \LS{f_d}{1}}{k}.
\]
Moreover,
\[\LEBEG{(\TTJ{J_1}\cap\TTJ{J_2}
  )^\bot \cap\LS{\|.\|_2}{2\delta_2}}{N-k} = \alpha(N-k) (2\delta_2)^{N-k},
\]
\[\LEBEG{\TTJ{J_1}\cap\TTJ{J_2}\cap
  \LS{f_d}{1}}{k} \leq \LEBEG{\TTJ{J_1}\cap\TTJ{J_2}\cap
  \LS{\|.\|_2}{\delta_3}}{k} = \alpha(k) (\delta_3)^{k},
\]
and we obviously have
\[\#\H(K,k) \leq \#\J(K) (\#\J(K)-1).
\]
\end{proof}
The above corollary shows that the
``quality'' of the asymptotic as
$\frac{\tau}{\theta}\rightarrow 0$ depends on $\|.\|$, $f_d$ and
on the dictionary through the terms $\QK{k}$. The latter terms are
bounded from above using \eqref{QKub} and \eqref{JKub} and they are clearly overestimated.
Even though the bound we provide are very pessimistic, they depend only on $\|.\|$,
$f_d$ and $\#I$ and can be computed.

\begin{remark}
Let us emphasize that ``uniform'' bounds in the spirit of Corollary \ref{corUnifBounds} can be
derived from  Proposition \ref{propSSTKproba}, and Theorems \ref{corASTKE},
\ref{thm=proba} and \ref{thmexpect}.
We leave this task to interested readers that need to compute easily the relevant bounds.
\end{remark}
\section{Sets of data yielding $K$-sparse solutions}

For any $K\in\{0,\ldots, N\}$ and $\tau>0$, we denote
\beq\ASTKE{K}{\tau}  \defeq \left\{d\in \RR^N: \val(\P_d) = K \right\}.
\label{DTK}\eeq
>From the definition of $\ASTK{K}{\tau}$ in (\ref{ASTK}), it is straightforward that
\begin{equation}\label{diff1}
\ASTKE{K}{\tau} = \ASTK{K}{\tau} \setminus \ASTK{K-1}{\tau},~~~~
\forall K\in\{0,\ldots,N\},
\end{equation}
where we extend the definition of $\ASTK{K}{\tau}$ with
\[\ASTK{-1}{\tau} = \emptyset.
\]
Being the  difference of two measurable closed sets, $\ASTKE{K}{\tau}$ is clearly measurable.
Noticing also that
\begin{equation}\ASTK{K-1}{\tau}\subset\ASTK{K}{\tau} \label{K-1subsetK}\end{equation}
we get
\beq\LEBEG{ \ASTKE{K}{\tau}\cap \LS{f_d}{\theta}}{N}=\LEBEG{\ASTK{K}{\tau}\cap\LS{f_d}{\theta}}{N} -
\LEBEG{ \ASTK{K-1}{\tau}\cap \LS{f_d}{\theta}}{N}.
\label{diff}\eeq
Combining these observations with  Theorem \ref{propSSTK} yields an
important statement which is given below.

\begin{theorem}\label{corASTKE}
Let $K\in\{0,\ldots, N\}$, the norms $\|.\|$ and  $f_d$, and $(\psi_i)_{i\in I}$, be any.
Let $\theta>0$ and $\theta \geq \tau \max(\Delta_K,\Delta_{K-1})$ where
$\Delta_k$ is defined in (\ref{delmax}), for $k\in\{K-1,K\}$.
The Lebesgue measure in $\RR^N$ of the set $\ASTKE{K}{\tau}$
defined in (\ref{DTK}) satisfies
\beqn\CK{K}~ \tau^{N-K} (\theta-\hat\delta_K \tau)^K - \theta^N \eps_0'(K,\tau,\theta)
&\leq& \LEBEG{\ASTKE{K}{\tau}\cap \LS{f_d}{\theta}}{N} \\
&\leq& \CK{K}~
\tau^{N-K} (\theta+\hat\delta_K \tau)^K + \theta^N \eps_1(K,\tau,\theta),
\label{57}\eeqn
with
\begin{eqnarray*}
\eps_0'(K,\tau,\theta) & = & \eps_0(K,\tau,\theta) + \CK{K-1}
\left(\frac{\tau}{\theta}\right)^{N-(K-1)} \left(1+\hat\delta_{K-1} \frac{\tau}{\theta}\right)^{K-1},\\
\eps_1(K,\tau,\theta) & = & \eps_0(K-1,\tau,\theta) -  \CK{K-1} \left(\frac{\tau}{\theta}\right)^{N-(K-1)}
\left(1-\hat\delta_{K-1} \frac{\tau}{\theta}\right)^{K-1},
\end{eqnarray*}
where $\CK{k}$ for $k\in\{K-1,K\}$ are defined by (\ref{CK}), along with the
extension $\CK{-1} = 0$, whereas $\eps_0$ is as in Theorem \ref{propSSTK}
with the extension $\eps_0(-1,\tau,\theta)\equiv 0$.
\end{theorem}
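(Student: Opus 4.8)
The plan is to derive Theorem \ref{corASTKE} directly from Theorem \ref{propSSTK} by plugging the two-sided estimates for $\LEBEG{\ASTK{K}{\tau}\cap\LS{f_d}{\theta}}{N}$ and $\LEBEG{\ASTK{K-1}{\tau}\cap\LS{f_d}{\theta}}{N}$ into the exact identity \eqref{diff}. Since $\theta\geq\tau\max(\Delta_K,\Delta_{K-1})$, equation \eqref{Q3} and the definition \eqref{delmax} guarantee that $\theta\geq\tau\Delta_K$ and $\theta\geq\tau\Delta_{K-1}$ simultaneously, so Theorem \ref{propSSTK} is applicable at both levels $K$ and $K-1$. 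This is the only hypothesis-checking step; everything else is bookkeeping.

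For the \textbf{lower bound} on $\LEBEG{\ASTKE{K}{\tau}\cap\LS{f_d}{\theta}}{N}$, I would use the lower bound of \eqref{R1} at level $K$ and the upper bound of \eqref{R1} at level $K-1$:
\[
\LEBEG{\ASTKE{K}{\tau}\cap\LS{f_d}{\theta}}{N}
\geq \Big(\CK{K}\tau^{N-K}(\theta-\hat\delta_K\tau)^K - \theta^N\eps_0(K,\tau,\theta)\Big)
- \CK{K-1}\tau^{N-(K-1)}(\theta+\hat\delta_{K-1}\tau)^{K-1}.
\]
Factoring $\theta^N$ out of the subtracted term, $\CK{K-1}\tau^{N-(K-1)}(\theta+\hat\delta_{K-1}\tau)^{K-1} = \theta^N\CK{K-1}(\tau/\theta)^{N-(K-1)}(1+\hat\delta_{K-1}\tau/\theta)^{K-1}$, which is exactly the extra term appearing in $\eps_0'$. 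Symmetrically, for the \textbf{upper bound} I would use the upper bound of \eqref{R1} at level $K$ and the lower bound at level $K-1$:
\[
\LEBEG{\ASTKE{K}{\tau}\cap\LS{f_d}{\theta}}{N}
\leq \CK{K}\tau^{N-K}(\theta+\hat\delta_K\tau)^K
- \Big(\CK{K-1}\tau^{N-(K-1)}(\theta-\hat\delta_{K-1}\tau)^{K-1} - \theta^N\eps_0(K-1,\tau,\theta)\Big),
\]
and the same factorization of $\theta^N$ turns the parenthesized quantity into $\theta^N\eps_1(K,\tau,\theta)$ as stated. The boundary conventions $\CK{-1}=0$ and $\eps_0(-1,\tau,\theta)\equiv 0$ handle the case $K=0$ cleanly (where $\ASTK{-1}{\tau}=\emptyset$ makes \eqref{diff} reduce to the $K=0$ case of Theorem \ref{propSSTK}), and for $K=N$ one simply has $\eps_0(N,\tau,\theta)=0$; measurability of $\ASTKE{K}{\tau}$ was already noted before the statement via \eqref{diff1}.

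There is essentially no obstacle here: the result is a corollary-style consequence of Theorem \ref{propSSTK}. The only mild care needed is in the algebraic rearrangement that pulls $\theta^N$ out uniformly so that the error terms $\eps_0'$ and $\eps_1$ have the advertised form, and in verifying that the sign of the $\eps_1$ contribution is correct — note that $\eps_1$ is written with a minus sign in front of the $\CK{K-1}$ term, which reflects that subtracting the \emph{lower} bound at level $K-1$ (the favorable direction for an upper bound on the difference) produces a genuinely smaller upper bound than the crude $\CK{K}\tau^{N-K}(\theta+\hat\delta_K\tau)^K$ alone. One should also record that $\eps_1$ need not be nonnegative; it is simply the quantity that makes the inequality tight, and the asymptotic interpretation (analogous to Remark \ref{rmkASTK}) follows because both $\eps_0(K-1,\tau,\theta)$ and $\CK{K-1}(\tau/\theta)^{N-(K-1)}$ are $o((\tau/\theta)^{N-K})$ as $\tau/\theta\to 0$.
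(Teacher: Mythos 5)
Your proposal is correct and follows exactly the route the paper takes: apply the exact identity \eqref{diff} and insert the two-sided bounds of Theorem \ref{propSSTK} at levels $K$ and $K-1$ (lower-minus-upper for the lower bound, upper-minus-lower for the upper bound), then factor out $\theta^N$ to recognize $\eps_0'$ and $\eps_1$. Your write-up is in fact more explicit than the paper's, which states the same crossing of bounds in one line; no gaps.
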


\begin{proof}
By (\ref{diff}), we have
\beqnn \LEBEG{\ASTKE{K}{\tau}\cap \LS{f_d}{\theta}}{N}&\leq&
\mbox{Upper bound}\Big(\LEBEG{\ASTK{K}{\tau}\cap\LS{f_d}{\theta}}{N}\Big) -
\mbox{Lower bound}\Big(\LEBEG{ \ASTK{K-1}{\tau}\cap \LS{f_d}{\theta}}{N}\Big)\\
\LEBEG{\ASTKE{K}{\tau}\cap \LS{f_d}{\theta}}{N}&\geq&
\mbox{Lower bound}\Big(\LEBEG{\ASTK{K}{\tau}\cap\LS{f_d}{\theta}}{N}\Big) -
\mbox{Upper bound}\Big(\LEBEG{ \ASTK{K-1}{\tau}\cap \LS{f_d}{\theta}}{N}\Big)
\eeqnn
where the relevant upper and lower bounds were derived  in Theorem \ref{propSSTK}.
Since $\LEBEG{ \ASTK{K-1}{\tau}\cap \LS{f_d}{\theta}}{N}$ is negligible compared to
$\LEBEG{\ASTK{K}{\tau}\cap\LS{f_d}{\theta}}{N}$, the bounds corresponding to this term are
introduced in the error functions $\eps_0'(K,\tau,\theta)$ and $\eps_1(K,\tau,\theta)$.
\end{proof}

\begin{remark} Let us emphasize that Remark \ref{rmkASTK} is valid if we write
$\ASTKE{K}{\tau}$ in place of $\ASTK{K}{\tau}$. This gives the
asymptotic of the $\LEBEG{\ASTKE{K}{\tau}\cap \LS{f_d}{\theta}}{N}$ as $\frac{\tau}{\theta}$ goes to $0$.
This observation may seem surprising.
It only means that  as far as $\frac{\tau}{\theta}$ decreases, the chance
to get a solution with sparsity {\em strictly} smaller than $K$
is very small when compared to the chance of getting a sparsity $K$.
\end{remark}

\begin{remark}
In Section \ref{sparser}, we adapted Theorem \ref{propSSTK} to get
Corollary \ref{corUnifBounds}. In the latter, the gap between the
lower and upper bounds only depends on $\|.\|$, $f_d$ and $\QK{K-1}$
the latter depending on the dictionary in a controllable way.
A similar adaptation of Theorem \ref{corASTKE} is easy.
\end{remark}
\section{Statistical meaning of the results}\label{stat}

In this section we give a statistical interpretation of our main results, namely
Theorem \ref{propSSTK} and Theorem~\ref{corASTKE}.

\begin{proposition}\label{propSSTKproba}
Let $f_d$ and $\|.\|$ be any two norms and $(\psi_i)_{i\in I}$ be a dictionary in $\RR^N$.
For any $K\in\{0,\ldots, N\}$, let $\tau>0$ and $\theta$ be such that $\theta \geq \tau \Delta_K$ where
$\Delta_k$ is defined in (\ref{delmax}).
Consider a random variable $d$ with uniform distribution on $\LS{f_d}{\theta}$.
Then
\beqnn \frac{\CK{K}}{\LEBEG{\LS{f_d}{1}}{N}}
\left(\frac{\tau}{\theta}\right)^{N-K}\left(1 - \hat\delta_K \frac{\tau}{\theta}\right)^K
-\frac{\eps_0(K,\tau,\theta)}{\LEBEG{\LS{f_d}{1}}{N}}
&\leq& \PROBA{ \val(\P_d) \leq K }\\ &\leq& \
\frac{\CK{K} }{\LEBEG{\LS{f_d}{1}}{N}} \left(\frac{\tau}{\theta}\right)^{N-K}
\left(1 + \hat\delta_K \frac{\tau}{\theta}\right)^K,
\eeqnn
where
$\eps_0(K,\tau,\theta)$ is given in Theorem \ref{propSSTK}, equation \eqref{eps_0}.
Moreover we have the following asymptotical result:
\[\PROBA{ \val(\P_d) \leq K }=\frac{\CK{K}}{\LEBEG{\LS{f_d}{1}}{N}}
\left(\frac{\tau}{\theta}\right)^{N-K} + o\left(
  \left(\frac{\tau}{\theta}\right)^{N-K}
  \right)~~\mbox{as}~~\frac{\tau}{\theta}\to 0.\]
\end{proposition}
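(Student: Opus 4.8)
The plan is to deduce everything from Theorem~\ref{propSSTK} by unwinding the definition of the uniform law. Since $f_d$ is a norm on the finite-dimensional space $\RR^N$, the level set $\LS{f_d}{\theta}$ is a bounded convex body with nonempty interior, so $\LEBEG{\LS{f_d}{\theta}}{N}\in(0,+\infty)$ and the uniform distribution on it is well defined; for any measurable $A\subset\RR^N$ one then has $\PROBA{d\in A}=\LEBEG{A\cap\LS{f_d}{\theta}}{N}\big/\LEBEG{\LS{f_d}{\theta}}{N}$. Taking $A=\ASTK{K}{\tau}$, which is measurable by Theorem~\ref{proj=all}, and using that by the very definition \eqref{ASTK} the event $\{\val(\P_d)\leq K\}$ coincides with $\{d\in\ASTK{K}{\tau}\}$, I would first record the identity
\[\PROBA{\val(\P_d)\leq K}=\frac{\LEBEG{\ASTK{K}{\tau}\cap\LS{f_d}{\theta}}{N}}{\LEBEG{\LS{f_d}{\theta}}{N}}.\]

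Next I would compute the denominator: since $f_d$ is positively homogeneous, $\LS{f_d}{\theta}=\theta\,\LS{f_d}{1}$, hence by the scaling property of Lebesgue measure $\LEBEG{\LS{f_d}{\theta}}{N}=\theta^N\LEBEG{\LS{f_d}{1}}{N}$. For the numerator I would invoke the two-sided estimate \eqref{R1} of Theorem~\ref{propSSTK}, applicable precisely because the hypothesis $\theta\geq\tau\Delta_K$ is assumed. Substituting both into the displayed ratio and dividing the bounds and $\theta^N\LEBEG{\LS{f_d}{1}}{N}$ through by $\theta^N$, the announced inequalities follow from the elementary simplification $\tau^{N-K}(\theta\pm\hat\delta_K\tau)^K/\theta^N=(\tau/\theta)^{N-K}(1\pm\hat\delta_K\,\tau/\theta)^K$; the term $\theta^N\eps_0(K,\tau,\theta)$ from the lower bound of \eqref{R1} divides to $\eps_0(K,\tau,\theta)/\LEBEG{\LS{f_d}{1}}{N}$, which is exactly the error term in the statement.

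For the asymptotic claim I would observe that both sides of the resulting sandwich equal $\frac{\CK{K}}{\LEBEG{\LS{f_d}{1}}{N}}(\tau/\theta)^{N-K}+o\big((\tau/\theta)^{N-K}\big)$ as $\tau/\theta\to0$: indeed $(1\pm\hat\delta_K\,\tau/\theta)^K=1+O(\tau/\theta)$, and by \eqref{eps_0} the quantity $\eps_0(K,\tau,\theta)$ is a finite sum of terms proportional to $(\tau/\theta)^{N-k}$ with $k\leq K-1$, so each exponent satisfies $N-k\geq N-K+1$ and therefore $\eps_0(K,\tau,\theta)=o\big((\tau/\theta)^{N-K}\big)$ (the same observation already used in Remark~\ref{rmkASTK} and Remark~\ref{nule}); a squeeze then yields the stated limit. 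There is no genuine obstacle here, the proposition being a direct corollary of Theorem~\ref{propSSTK}; the only points requiring a word of care are the measurability of $\ASTK{K}{\tau}$ (already granted), the finiteness and positivity of $\LEBEG{\LS{f_d}{\theta}}{N}$ so that the ratio makes sense, and the verification that the exponents appearing in $\eps_0$ are strictly larger than $N-K$, so that the error term is genuinely negligible in the regime $\tau/\theta\to0$.
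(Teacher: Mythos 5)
Your proposal is correct and follows exactly the paper's own route: express $\PROBA{\val(\P_d)\leq K}$ as the ratio $\LEBEG{\ASTK{K}{\tau}\cap\LS{f_d}{\theta}}{N}/\LEBEG{\LS{f_d}{\theta}}{N}$, use the scaling identity $\LEBEG{\LS{f_d}{\theta}}{N}=\theta^N\LEBEG{\LS{f_d}{1}}{N}$, insert the two-sided bound \eqref{R1} of Theorem~\ref{propSSTK}, and obtain the asymptotics from the observation underlying Remark~\ref{rmkASTK} that every term of $\eps_0$ carries an exponent $N-k\geq N-K+1$. The only difference is that you spell out the elementary verifications (measurability, positivity of the denominator, the squeeze) that the paper leaves implicit.
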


\begin{proof}
Consider the set $\ASTK{K}{\tau}$ defined by (\ref{ASTK}).
We have
\[\PROBA{ \val(\P_d) \leq K }=\PROBA{d\in \ASTK{K}{\tau}\cap \LS{f_d}{\theta}}
=\frac{\LEBEG{\ASTK{K}{\tau}\cap \LS{f_d}{\theta}}{N}}{\LEBEG{\LS{f_d}{\theta}}{N}},\]
since $d$ is uniformly distributed on $\LS{f_d}{\theta}$.
The inequality result follow from Theorem \ref{propSSTK},
equation~\eqref{R1} and uses the observation that $\LEBEG{\LS{f_d}{\theta}}{N} = \theta^N\,
\LEBEG{\LS{f_d}{1}}{N} $.

The asymptotical result is a direct consequence of
Remark \ref{rmkASTK}.
\end{proof}

\begin{remark}
Notice that, as already noticed in \eqref{CNC0}, $\CK{N} =
\LEBEG{\LS{f_d}{1}}{N}$ and the asymptotic in Proposition
\ref{propSSTKproba} reads for $K=N$
\[\PROBA{ \val(\P_d) \leq N }= 1 + o(1) ~~~~ \mbox{ as }
\frac{\tau}{\theta}\to 0.
\]
In fact a better estimate is easy to obtain in this particular
case. We know indeed that for all $d\in\RR^N$, any solution of $\P_d$
involves an independent system of elements of $(\psi_i)_{i\in I}$. (A
sparser decomposition would otherwise exist.) Therefore we know that
for all $d\in\RR^N$, $ \val(\P_d) \leq N$. This yields
\begin{equation}\label{PN=1}
\PROBA{ \val(\P_d) \leq N }= 1.
\end{equation}
\end{remark}
\begin{theorem}\label{thm=proba}
Let $f_d$ and $\|.\|$ be any two norms and $(\psi_i)_{i\in I}$ be a dictionary in $\RR^N$.
For any $K\in\{0,\ldots, N\}$, let $\tau>0$ and $\theta$ be such that $\theta \geq \tau\max(\Delta_K,\Delta_{K-1})$ where
$\Delta_k$ is defined in (\ref{delmax}).
Consider a random variable $d$ with uniform distribution on $\LS{f_d}{\theta}$.
Then we have
\beqnn \frac{\CK{K} }{\LEBEG{\LS{f_d}{1}}{N}}
\left(\frac{\tau}{\theta}\right)^{N-K} \left(1 - \hat\delta_K \frac{\tau}{\theta}\right)^K
-  \eps^-(K,\tau,\theta)
&\leq& \PROBA{ \val(\P_d) = K }\\ &\leq& \
\frac{\CK{K} }{\LEBEG{\LS{f_d}{1}}{N}}
\left(\frac{\tau}{\theta}\right)^{N-K} \left(1 + \hat\delta_K \frac{\tau}{\theta}\right)^K
+\eps^+(K,\tau,\theta)
\eeqnn
with
\[\eps^-(K,\tau,\theta) =
\frac{\eps_0'(K,\tau,\theta)}{\LEBEG{\LS{f_d}{1}}{N} }
\]
and
\[\eps^+(K,\tau,\theta) =
\frac{\eps_1(K,\tau,\theta)}{\LEBEG{\LS{f_d}{1}}{N}},
\]
for $\eps_0'$ and $\eps_1$ as defined in Theorem \ref{corASTKE} and
for $\hat\delta_K$ and $\CK{K}$ defined in (\ref{deltaK}) and (\ref{CK}), respectively.

In particular, we have
\begin{equation}\label{proba=K}
\PROBA{ \val(\P_d) = K } =
\frac{\CK{K}}{\LEBEG{\LS{f_d}{1}}{N}}
\left(\frac{\tau}{\theta}\right)^{N-K} +o
\left(\left(\frac{\tau}{\theta}\right)^{N-K}\right)~~~\mbox{as}~~~\frac{\tau}{\theta}\to
0.
\end{equation}
\end{theorem}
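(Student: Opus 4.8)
The plan is to reduce the probabilistic statement to the Lebesgue-measure estimate of Theorem~\ref{corASTKE}, exactly as Proposition~\ref{propSSTKproba} was obtained from Theorem~\ref{propSSTK}. First I would observe that, since $d$ is uniformly distributed on $\LS{f_d}{\theta}$ and $\ASTKE{K}{\tau}$ is measurable (by \eqref{diff1}),
\[
\PROBA{\val(\P_d)=K} = \PROBA{d\in \ASTKE{K}{\tau}\cap\LS{f_d}{\theta}}
= \frac{\LEBEG{\ASTKE{K}{\tau}\cap\LS{f_d}{\theta}}{N}}{\LEBEG{\LS{f_d}{\theta}}{N}}.
\]
Then I would use the elementary scaling identity $\LEBEG{\LS{f_d}{\theta}}{N}=\theta^N\LEBEG{\LS{f_d}{1}}{N}$, which holds because $f_d$ is a norm.

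Next, under the hypothesis $\theta\geq\tau\max(\Delta_K,\Delta_{K-1})$, Theorem~\ref{corASTKE} applies and bounds the numerator from above and below in terms of $\CK{K}\,\tau^{N-K}(\theta\pm\hat\delta_K\tau)^K$ plus the error terms $\theta^N\eps_0'(K,\tau,\theta)$ and $\theta^N\eps_1(K,\tau,\theta)$. Dividing these inequalities through by $\theta^N\LEBEG{\LS{f_d}{1}}{N}$ and factoring $\tau^{N-K}(\theta\pm\hat\delta_K\tau)^K = \theta^N(\tau/\theta)^{N-K}(1\pm\hat\delta_K\tau/\theta)^K$ produces precisely the claimed two-sided bound, with $\eps^{-}=\eps_0'/\LEBEG{\LS{f_d}{1}}{N}$ and $\eps^{+}=\eps_1/\LEBEG{\LS{f_d}{1}}{N}$. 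Finally, the asymptotic equality \eqref{proba=K} follows from the same division applied to the asymptotic form of Theorem~\ref{corASTKE} (the analogue of Remark~\ref{rmkASTK} for $\ASTKE{K}{\tau}$), since the leading term is $\CK{K}\theta^N(\tau/\theta)^{N-K}$ and every contribution collected into $\eps^{\pm}$ is of order $o\big((\tau/\theta)^{N-K}\big)$ as $\tau/\theta\to 0$ (the terms in $\eps_0$, $\eps_1$, $\eps_0'$ carry powers $(\tau/\theta)^{N-k}$ with $k\leq K-1$, hence are higher order).

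This argument is essentially bookkeeping, so I do not anticipate a genuine obstacle; the only point requiring a little care is matching the hypothesis on $\theta$ to what Theorem~\ref{corASTKE} demands (it is $\theta\geq\tau\max(\Delta_K,\Delta_{K-1})$ in both places, so it is automatic) and verifying that all error terms collected into $\eps^{\pm}$ really are $o\big((\tau/\theta)^{N-K}\big)$, which is immediate from $k_K\leq k\leq K-1<K$ together with the bounds $\hat\delta_K,\hat\delta'_{K,k}\leq 3\odelta$ from \eqref{Q1}--\eqref{Q2}.
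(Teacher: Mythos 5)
Your proposal is correct and follows essentially the same route as the paper: express $\PROBA{\val(\P_d)=K}$ as $\LEBEG{\ASTKE{K}{\tau}\cap\LS{f_d}{\theta}}{N}/\LEBEG{\LS{f_d}{\theta}}{N}$, use the scaling $\LEBEG{\LS{f_d}{\theta}}{N}=\theta^N\LEBEG{\LS{f_d}{1}}{N}$, and divide the two-sided bound of Theorem~\ref{corASTKE} by that quantity. Your extra check that the error terms are $o\big((\tau/\theta)^{N-K}\big)$ because they carry powers $(\tau/\theta)^{N-k}$ with $k\leq K-1$ is exactly the justification the paper leaves implicit for \eqref{proba=K}.
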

\begin{proof}
Consider the set $\ASTKE{K}{\tau}$ defined in \eqref{DTK}. We have
\[\PROBA{ \val(\P_d)=K}=\PROBA{d\in\ASTKE{K}{\tau}\cap \LS{f_d}{\theta}}
=\frac{\LEBEG{\ASTKE{K}{\tau}\cap \LS{f_d}{\theta}}{N}}{\LEBEG{\LS{f_d}{\theta}}{N}},\]
since $d$ is uniformly distributed on $\LS{f_d}{\theta}$.
The inequality result follows from Theorem \ref{corASTKE}, equation \eqref{57}, and $\LEBEG{\LS{f_d}{\theta}}{N} =
\LEBEG{\LS{f_d}{1}}{N} \theta^N$.
\end{proof}

\begin{remark}
>From \eqref{proba=K} and \eqref{CNC0}, we see that
\[\PROBA{ \val(\P_d) = N  } = 1 + o(1) ~~~~\mbox{ as
}\frac{\tau}{\theta} \to 0.
\]
For any other $K\in\{0,\ldots,N-1\}$, $\PROBA{ \val(\P_d) = K  }$ goes
to $0$, as $\frac{\tau}{\theta} \to 0$. Moreover, we know how rapidly
they go to $0$. In particular, we know that $\PROBA{ \val(\P_d) =
  K-1  }$ becomes negligible when compared to $\PROBA{ \val(\P_d) = K  }$, as
$\frac{\tau}{\theta} \to 0$.
\end{remark}

Notice that even though $d$  is a random variable on a subset of $\RR^N$, the
value of our function $\val(\P_d)$ is an integer larger than zero. We can also compute the
expectation of $\val(\P_d)$:
\begin{eqnarray*}
\EXPECT{\val(\P_d)} & = & \sum_{K=1}^N K ~\PROBA{\val(\P_d) = K} \\
& = & \sum_{K=1}^N K \left(\PROBA{\val(\P_d) \leq K} - \PROBA{\val(\P_d) \leq K-1}  \right) \\
& = &  \sum_{K=0}^N K ~\PROBA{\val(\P_d) \leq K} -
  \sum_{K=0}^{N-1} (K+1) \PROBA{\val(\P_d) \leq K} \\
& = & \PROBA{\val(\P_d) \leq N} - \sum_{K=0}^{N-1} \PROBA{\val(\P_d) \leq K} \\
& = & N  - \sum_{K=0}^{N-1} \PROBA{\val(\P_d) \leq K}
\end{eqnarray*}
where we used \eqref{K-1subsetK} and \eqref{PN=1}.

This yields the following Theorem.
\begin{theorem}\label{thmexpect}
Let $f_d$ and $\|.\|$ be any two norms and $(\psi_i)_{i\in I}$ be a dictionary in $\RR^N$.
Let $\tau>0$ and $\theta$ be such that $\theta \geq \tau \max_{0\leq K \leq N} \Delta_K$ where
$\Delta_K$ is defined in (\ref{delmax}). Consider a random variable
$d$ with uniform distribution on $\LS{f_d}{\theta}$. Then
\beqnn
N - \sum_{K=0}^{N-1}\frac{\CK{K} }{\LEBEG{\LS{f_d}{1}}{N}} \left(\frac{\tau}{\theta}\right)^{N-K}
\left(1 + \hat\delta_K \frac{\tau}{\theta}\right)^K&\leq& \EXPECT{ \val(\P_d) }\\ &\leq& N - \sum_{K=0}^{N-1}
\frac{\CK{K}}{\LEBEG{\LS{f_d}{1}}{N}} \left(\frac{\tau}{\theta}\right)^{N-K}\left(1 - \hat\delta_K
  \frac{\tau}{\theta}\right)^K -\frac{\eps_0(K,\tau,\theta)}{\LEBEG{\LS{f_d}{1}}{N}}
\eeqnn
where
$\eps_0(K,\tau,\theta)$ is given in Theorem \ref{propSSTK}, equation \eqref{eps_0}.
Moreover we have the following asymptotical result:
\[\EXPECT{ \val(\P_d) }= N - \frac{\CK{N-1}}{\LEBEG{\LS{f_d}{1}}{N}}
\frac{\tau}{\theta} + o\left(\frac{\tau}{\theta} \right)~~\mbox{as}~~
\frac{\tau}{\theta}\to 0.\]
\end{theorem}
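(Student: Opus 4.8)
The plan is to reduce everything to the elementary telescoping identity established immediately before the statement, namely
\[\EXPECT{\val(\P_d)} = N - \sum_{K=0}^{N-1}\PROBA{\val(\P_d)\leq K},\]
which is valid since $\val(\P_d)$ takes values in $\{0,\ldots,N\}$ and $\PROBA{\val(\P_d)\leq N}=1$ by \eqref{PN=1}. Once this is in hand, the whole task is to plug in, for each $K\in\{0,\ldots,N-1\}$, the two-sided estimate of $\PROBA{\val(\P_d)\leq K}$ provided by Proposition \ref{propSSTKproba}. The hypothesis $\theta\geq\tau\max_{0\leq K\leq N}\Delta_K$ is precisely what makes the hypothesis $\theta\geq\tau\Delta_K$ of Proposition \ref{propSSTKproba} hold simultaneously for every relevant $K$, so the estimate may be applied term by term.

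First I would substitute the \emph{upper} bound of Proposition \ref{propSSTKproba} for each $\PROBA{\val(\P_d)\leq K}$; because of the minus sign in front of the sum, this yields the claimed \emph{lower} bound
\[N-\sum_{K=0}^{N-1}\frac{\CK{K}}{\LEBEG{\LS{f_d}{1}}{N}}\left(\frac{\tau}{\theta}\right)^{N-K}\left(1+\hat\delta_K\frac{\tau}{\theta}\right)^K\]
for $\EXPECT{\val(\P_d)}$. Symmetrically, substituting the \emph{lower} bound of Proposition \ref{propSSTKproba}, which carries the extra correction $-\eps_0(K,\tau,\theta)/\LEBEG{\LS{f_d}{1}}{N}$ inside each summand, produces the stated upper bound. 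No argument beyond monotonicity of expectation and the sign bookkeeping in the telescoping identity is needed for this finite (non-asymptotic) part.

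For the asymptotic statement I would isolate in each of the two bounds the single term $K=N-1$, which equals $\frac{\CK{N-1}}{\LEBEG{\LS{f_d}{1}}{N}}\frac{\tau}{\theta}$ times $\left(1\pm\hat\delta_{N-1}\frac{\tau}{\theta}\right)^{N-1}=1+o(1)$, and observe that every remaining term, for $K\leq N-2$, carries a factor $\left(\frac{\tau}{\theta}\right)^{N-K}$ with exponent $N-K\geq 2$, hence is $o\!\left(\frac{\tau}{\theta}\right)$ as $\frac{\tau}{\theta}\to 0$. Likewise the correction terms $\eps_0(K,\tau,\theta)$ are, by their definition \eqref{eps_0} together with the uniform bounds \eqref{Q1}, \eqref{Q2}, \eqref{Q3}, built out of powers $\left(\frac{\tau}{\theta}\right)^{N-k}$ with $k\leq K-1\leq N-2$, so they too are $o\!\left(\frac{\tau}{\theta}\right)$. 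Collecting these contributions, both the lower and the upper bound collapse to
\[N-\frac{\CK{N-1}}{\LEBEG{\LS{f_d}{1}}{N}}\frac{\tau}{\theta}+o\left(\frac{\tau}{\theta}\right),\]
which is the last display of the theorem.

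The only point requiring any care—the closest thing to an obstacle—is the bookkeeping of signs and of the $\eps_0$ corrections: one must check that each correction lands on the correct (upper) side of the inequality and, for the asymptotic, that $\eps_0(N-1,\tau,\theta)$ cannot contaminate the leading $\frac{\tau}{\theta}$ coefficient. The latter holds because $\eps_0(N-1,\tau,\theta)$ only involves exponents $N-k$ with $k\leq N-2$, i.e. exponents at least $2$. There is no analytic difficulty here; the theorem is in essence a corollary of Proposition \ref{propSSTKproba} and the telescoping formula for the expectation.
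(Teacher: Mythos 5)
Your proof is correct and follows essentially the same route as the paper: the telescoping identity $\EXPECT{\val(\P_d)}=N-\sum_{K=0}^{N-1}\PROBA{\val(\P_d)\leq K}$ derived just before the theorem, followed by term-by-term substitution of the two-sided bounds of Proposition~\ref{propSSTKproba} (with the sign reversal) and the observation that only the $K=N-1$ term survives at order $\tau/\theta$. The sign bookkeeping and the handling of the $\eps_0$ corrections are all accurate.
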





\section{Illustration: Euclidean norms for $\|.\|$ and $f_d$}\label{illustre-sec}
Consider the situation when both $\|.\|$ and $f_d$ are the Euclidean norm on $\RR^N$:
\beq\|.\|=f_d=\|.\|_2~~~\mbox{where}~~~~\|u\|_2=\sqrt{\PS{u}{u}},~~\mbox{ with}~~~
\PS{u}{v} = \sum_{i=1}^N u_i v_i \label{Eu}.\eeq
Noticing that the Euclidean norm is rotation invariant, for any vector subspace $V\subseteq\RR^N$ we have
\beq
P_{V^\bot}\left(\LS{\|.\|_2}{\tau}\right)=V^\bot\cap\LS{\|.\|_2}{\tau}=\{u\in V^\bot:\|u\|_2\leq\tau\}.
\label{wikiki}
\eeq
The equivalent norm $h$ and the constant $\odelta$ derived in Lemma \ref{lemme_h} are simply
\beqnn h(u)&=&\|u\|_2,~~~\forall u\in V^\bot,\\
\odelta&=&1.\eeqnn

The constant $\delta_V$ in  assertion (ii) of Proposition \ref{propV}, defined by \eqref{delV}, reads
$\delta_V=1.$
Then the inequality condition on $\theta$ and $\tau$ is simplified to $\theta\geq\tau$.

The constant $C$ in (\ref{conC}) in the same proposition depends on $K$ (the dimension of the subspace $V$) and
reads (see \cite[p.60]{Evans92} for details)
\[ C=\alpha(K)\alpha(N-K)~\defeq~\C(K),\]
where for any integer $n>0$ we have
\beq  \alpha(n)=\frac{\pi^{n/2}}{\Gamma(n/2+1)}~~~\mbox{for}~~~
\Gamma(n)=\int_0^\infty e^{-x}x^{n-1}dx. \label{alpha}
\eeq
Here $\Gamma$ is the usual Gamma function.
Using that $\Gamma(n+1)=n\Gamma(n)$, it comes
\beq \C(K)=\frac{4\pi^{\frac{N}{2}}}{K(N-K)\Gamma\big(\frac{N-K}{2}\big)\Gamma\big(\frac{K}{2}\big)} \label{Bk}\eeq
>From the preceding, the constants $\delta_J$ and $C_J$ in Corollary \ref{rmkTJ} read
\beqn  \delta_J&=&1,~~~\forall J\subset I,\label{new}\\
C_J&=& \C(K), \label{new0}
\eeqn
where the expression of $\C(K)$ is given in \eqref{Bk}.

The norm $g$ arising in (\ref{g}) in Proposition \ref{basic} reads
\beqnn g(u)&=&\sup\{\|u_1\|_2+\|u_2\|_2,~\|u_1\|_2+\|u_3\|_2\}\\
&=&\|u_1\|_2+\sup\{\|u_2\|_2,~\|u_3\|_2\}\eeqnn
where $u=u_1+u_2+u_3$ is decomposed according to (\ref{decomp}).
Then
\[f_d(u)=\|u\|_2=\|u_1\|_2+\|u_2\|_2+\|u_3\|_2 \leq \delta_{\!J_1\!,\!J_2}g(u),~~~\forall u\in W^\bot~~~\mbox{if}
~~~\delta_{\!J_1\!,\!J_2}=2\]

The constants $\delta_{\!J_1\!,\!J_2}$ and $Q_{\!J_1\!,\!J_2}$ in Proposition \ref{basic} read
\beqn  \delta_{_{\!J_1\!,\!J_2}}&=&2 \label{dmau}\\
Q_{\!J_1\!,\!J_2}&=& \C(k)\label{QJ1J2},
\eeqn
where $\C(k)$ is defined according to \eqref{Bk}.

For any $k=1,\ldots,N$, the constants $\hat\delta_k$ and $\CK{k}$ in (\ref{deltaK})-(\ref{CK}) read
\beqnn \hat\delta_k&=&1,\\
\CK{k}  &=& \C(k)~ \#\J(k).
\eeqnn
Clearly, $\#\J(K)$ depends on the dictionary $(\psi_i)_{i\in I}$.

The constants $\hat\delta'_{K,k}$ and $\QK{k}$, introduced in (\ref{dpK}) and (\ref{QK}), respectively, are
\beqn
\hat\delta'_{K,k}&=&2,\\
\QK{k}&=&~\C(k)~\#\H(K,k).
\eeqn
Here again, $\#\H(K,k)$ depends on the choice of dictionary and in any case,  $\#\H(K,k)=0$ for $k<k_0$ (where $k_0$ is
defined in \eqref{ko}). The constant in \eqref{delmax} is $\Delta_K=2$ and the inequality \eqref{Q3} is satisfied.

The main inequality in  Theorem \ref{propSSTK} now reads
\beqnn \C(K) \#\{\J(K)\}~   \tau^{N-K} (\theta- \tau)^K - \eps_0(K,\tau,\theta) &\leq&
\LEBEG{\ASTK{K}{\tau}\cap \LS{f_d}{\theta}}{N}\\
&\leq& \C(K) \#\{\J(K)\}~\tau^{N-K} (\theta+\tau)^K ,
\eeqnn
where $\C(K)$ is defined by \eqref{Bk} and
the error term $\eps_0(K,\tau,\theta)$ is
\[ \eps_0(K,\tau,\theta) =  \frac{1}{2}\sum_{k=k_0}^{K-1} \C(k)~\#\{\H(K,k)\}~\tau^{N-k} (\theta + 2 \tau)^k.\]

In order to provide the statistical interpretation in section \ref{stat}, we notice that
$\LEBEG{\LS{f_d}{1}}{N}=\alpha(N)$ for $\alpha(.)$ as given in (\ref{alpha}), and hence
\[\LEBEG{\LS{f_d}{1}}{N}=\frac{\pi^{N/2}}{\Gamma(N/2+1)}.\]

\section{Conclusion and perspectives}
In this paper, we derive lower and upper bounds for different
quantities concerning the model $(\P_d)$. Typically, the difference between the upper and
the lower bound has an order of magnitude $(\frac{\tau}{\theta})^{N-K+1}$
while the quantities which are estimated are propositional to
$(\frac{\tau}{\theta})^{N-K}$. The difference between the upper and
lower bounds is made of
\begin{itemize}
\item The terms $\theta\pm \delta_v \tau$ which come from the
  inclusions $B_0\subseteq ~V^\tau \cap \LS{f_d}{\theta}\subseteq
  B_1$, in the proof Proposition \ref{propV}. This approximation is of
  the order $(\frac{\tau}{\theta})^{N-K+1}$.
It may be possible to reach a larger order of magnitude (e.g. $(\frac{\tau}{\theta})^{N-K+2}$)
under the assumption that $f_d$ is regular away from $0$ (e.g. twice differentiable).
  This would permit to improve Proposition
  \ref{propV} and the theorems that use its conclusions.
\item A term of the form $-\theta^N \eps_0(K,\tau,\theta)$ could be added to the upper bound
  in \eqref{R1}. This term is not present because of the
  approximation made in \eqref{missing}.
Such a term ``$-\theta^N \eps_0(K,\tau,\theta)$'' could be obtained by
  computing
  the size of the intersection of more than two cylinder-like sets in
  Proposition \ref{basic} (doing so we would also avoid the
  approximation in \eqref{mnb}) and by improving this proposition by
  bounding $\LEBEG{\TTT{J_1}{\tau} \cap \TTT{J_2}{\tau}
    \cap \LS{f_d}{\theta}}{N}$ from below. This is probably a straightforward
  adaptation of the current proof of Proposition \ref{basic}.

  This improvement is possible but not necessary in this paper since
  (again) this approximation yields an error whose order of magnitude is
  $(\frac{\tau}{\theta})^{N-K+1}$. We can anyway not get a better
  order of magnitude unless the approximation mentioned in the
  previous item is not improved (i.e. more regularity is assumed for
  $f_d$).
\end{itemize}

Besides those aspects, several future developments can be envisaged:
\begin{itemize}
\item An important improvement would be to assume a more specialized form for
the data distribution. One first step would
  be a distribution of the shape $\propto e^{-f_d(w)}$ which is continuous. In
  our opinion, one possible goal
  is to deal with a data distribution defined by a kernel. This is indeed
  one of the standard technique used in machine learning theory to
  approximate data distributions.
\item Another way of improvement is to adapt those results to the
  context of infinite dimensional spaces. This adaptation might not be
  trivial since (for instance) there is no Lebesgue measure in those
  spaces.
\item We are also preparing a paper where a similar analysis is
  performed for the Basis Pursuit Denoising (i.e. $l^1$
  regularization) with the same asymptotic. It will clearly show what
  is in common and what are the differences between $\ell_0$ and $l^1$
  regularization.
\item Performing a similar analysis for the Orthogonal Matching
  Pursuit would, of course, be a interesting and complementary result.
\item In a forthcoming work, we develop the theory in the context of
  orthogonal bases instead of general dictionaries (frames). This
  simplification of the hypotheses simplifies a lot the formulas of the
  current paper and illustrate it.
\end{itemize}

\bibliographystyle{plain}
\markboth{}{}

%
\def\jan{Jan. }
\def\feb{Feb. }
\def\mar{Mar. }
\def\apr{Apr. }
\def\may{May }
\def\jun{June }
\def\jul{July }
\def\aug{Aug. }
\def\sep{Sep. }
\def\oct{Oct. }
\def\nov{Nov. }
\def\dec{Dec. }
\def\Jan{Jan. }
\def\Feb{Feb. }
\def\Mar{Mar. }
\def\Apr{Apr. }
\def\May{May }
\def\Jun{June }
\def\Jul{July }
\def\Aug{Aug. }
\def\Sep{Sep. }
\def\Oct{Oct. }
\def\Nov{Nov. }
\def\Dec{Dec. }
\def\sub{submitted to }

%
%

\def\AsAs{Astrononmy and Astrophysics}                  
\def\AAP{Advances in Applied Probability}               
\def\ABE{Annals of Biomedical Engineering}              
\def\ACFAS{Proceedings of ACFAS Conference}             
\def\AISM{Annals of Institute of Statistical Mathematics}       
\def\AMS{Annals of Mathematical Statistics}         
\def\AO{Applied Optics}                         
\def\AP{The Annals of Probability}                  
\def\ARAA{Annual Review of Astronomy and Astrophysics}          
\def\AST{The Annals of Statistics}                  
\def\AT{Annales des T\'el\'ecommunications}             
\def\BMC{Biometrics}                            
\def\BMK{Biometrika}                            
\def\CPAM{Communications on Pure and Applied Mathematics}       
\def\EMK{Econometrica}                          
\def\CRAS{Compte-rendus de l'acad\'emie des sciences}           
\def\CVGIP{Computer Vision and Graphics and Image Processing}       
\def\GJRAS{Geophysical Journal of the Royal Astrononomical Society} 
\def\GSC{Geoscience}                        
\def\GPH{Geophysics}                            
\def\GRETSI#1{Actes du #1$^{\mbox{e}}$ Colloque GRETSI}         
\def\CGIP{Computer Graphics and Image Processing}           
\def\ICASSP{Proceedings of the IEEE Int. Conf. on Acoustics,
    Speech and Signal Processing}                   
\def\ICEMBS{Proceedings of IEEE EMBS}                   
\def\ICIP{Proceedings of the IEEE International Conference on Image Processing}
\def\EUSIPCO{Proceedings of European Signal Processing Conference} 
\def\SSAP{Proceedings of the IEEE Statistical Signal and Array Processing} 
\def\ieeP{Proceedings of the IEE}                   
\def\ieeeAC{IEEE Transactions on Automatic and Control}         
\def\ieeeAES{IEEE Transactions on Aerospace and Electronic Systems} 
\def\ieeeAP{IEEE Transactions on Antennas and Propagation}      
\def\ieeeAPm{IEEE Antennas and Propagation Magazine}            
\def\ieeeASSP{IEEE Transactions on Acoustics Speech and Signal Processing}
\def\ieeeBME{IEEE Transactions on Biomedical Engineering}       
\def\ieeeCS{IEEE Transactions on Circuits and Systems}          
\def\ieeeCT{IEEE Transactions on Circuit Theory}            
\def\ieeeC{IEEE Transactions on Communications}             
\def\ieeeGE{IEEE Transactions on Geoscience and Remote Sensing}     
\def\ieeeGEE{IEEE Transactions on Geosciences Electronics}      
\def\ieeeIP{IEEE Transactions on Image Processing}          
\def\ieeeIT{IEEE Transactions on Information Theory}            
\def\ieeeMI{IEEE Transactions on Medical Imaging}           
\def\ieeeMTT{IEEE Transactions on Microwave Theory and Technology}  
\def\ieeeM{IEEE Transactions on Magnetics}              
\def\ieeeNS{IEEE Transactions on Nuclear Sciences}          
\def\ieeePAMI{IEEE Transactions on Pattern Analysis and Machine Intelligence}
\def\ieeeP{Proceedings of the IEEE}                 
\def\ieeeRS{IEEE Transactions on Radio Science}             
\def\ieeeSMC{IEEE Transactions on Systems, Man and Cybernetics}     
\def\ieeeSP{IEEE Transactions on Signal Processing}         
\def\ieeeSSC{IEEE Transactions on Systems Science and Cybernetics}  
\def\ieeeSU{IEEE Transactions on Sonics and Ultrasonics}        
\def\ieeeUFFC{IEEE Transactions on Ultrasonics Ferroelectrics and Frequency Control}
\def\IJC{International Journal of Control}              
\def\IJCV{International Journal of Computer Vision}         
\def\IJIST{International Journal of Imaging Systems and Technology} 
\def\IP{Inverse Problems}                       
\def\ISR{International Statistical Review}              
\def\IUSS{Proceedings of International Ultrasonics Symposium}       
\def\JAPH{Journal of Applied Physics}                   
\def\JAP{Journal of Applied Probability}                
\def\JAS{Journal of Applied Statistics}                 
\def\JASA{Journal of Acoustical Society America}            
\def\JASAS{Journal of American Statistical Association}         
\def\JBME{Journal of Biomedical Engineering}                
\def\JCAM{Journal of Computational and Applied Mathematics}     
\def\JCP{Journal of Computational Physics}
\def\JEWA{Journal of Electromagnetic Waves and Applications}        
\def\JMIV{Journal of Mathematical Imaging and Vision}
\def\JMO{Journal of Modern Optics}                  
\def\JNDE{Journal of Nondestructive Evaluation}             
\def\JMP{Journal of Mathematical Physics}               
\def\JOSA{Journal of the Optical Society of America}            
\def\JP{Journal de Physique}                        
\def\JRSSA{Journal of the Royal Statistical Society A}          
\def\JRSSB{Journal of the Royal Statistical Society B}          
\def\JRSSC{Journal of the Royal Statistical Society C}          
\def\JSPI{Journal of Statistical Planning and Inference}        
\def\JTSA{Journal of Time Series Analysis}                          
\def\JVCIR{Journal of Visual Communication and Image Representation}    
\def\KAP{Kluwer \uppercase{A}cademic \uppercase{P}ublishers}                            %
\def\MNAS{Mathematical Methods in Applied Science}          
\def\MNRAS{Monthly Notices of the Royal Astronomical Society}       
\def\MP{Mathematical Programming}                   
    \def\NSIP{NSIP}  
\def\OC{Optics Communication}                       
\def\PRA{Physical Review A}                     
\def\PRB{Physical Review B}                     
\def\PRC{Physical Review C}                     
\def\PRD{Physical Review D}                     
\def\PRL{Physical Review Letters}                   
\def\RGSP{Review of Geophysics and Space Physics}           
\def\RPA{Revue de Physique Appliqu\'e}                          
\def\RS{Radio Science}                          
\def\SP{Signal Processing}                      
\def\siamAM{SIAM Journal on Applied Mathematics}            
\def\siamCO{SIAM Journal on Control and Optimization}                   
\def\siamC{SIAM Journal on Control}                 
\def\siamJO{SIAM Journal on Optimization}               
\def\siamMA{SIAM Journal on Mathematical Analysis}          
\def\siamNA{SIAM Journal on Numerical Analysis}             
\def\siamSC{SIAM Journal on Scientific Computing}             
\def\siamMMS{SIAM Journal on Multiscale Modeling and Simulation}             %
\def\siamMatrix{SIAM Journal on Matrix Analysis and Applications}
\def\siamR{SIAM Review}                         
\def\SSR{Stochastics and Stochastics Reports}                   
\def\TPA{Theory of Probability and its Applications}            
\def\TMK{Technometrics}                         
\def\TS{Traitement du Signal}                       
\def\UCMMP{U.S.S.R. Computational Mathematics and Mathematical Physics} 
\def\UMB{Ultrasound in Medecine and Biology}                
\def\US{Ultrasonics}                            
\def\USI{Ultrasonic Imaging}                        

\bibliography{sparsest.bbl}

\end{document}